\DeclareMathOperator*{\argmin}{argmin}
\pgfplotsset{compat=1.18}
\definecolor{Red}{rgb}{1,0,0}
\definecolor{Green}{rgb}{0,.6,0}
\definecolor{Blue}{rgb}{0,0,1}
\newcommand{\x}{\bm{x}}
\newcommand{\y}{\bm{y}}
\newcommand{\z}{\bm{z}}
\newcommand{\w}{\bm{w}}
\newcommand{\quant}{\mathrm{qnt}}
\newcommand{\integer}{\mathrm{int}} 
\newcommand{\continuous}{\mathrm{cont}} 
\newcommand{\cat}{\mathrm{cat}}
\newcommand{\feasible}{\text{\textsc{fea}}}
\newcommand{\infeasible}{\text{\textsc{inf}}}
\newcommand{\diag}{\mathrm{diag}}
\Crefname{subsection}{Section}{Sections}
\crefname{appendix}{}{}  
\newcommand{\mads}{{\sf MADS}\xspace}
\newcommand{\catmads}{{\sf CatMADS}\xspace}
\newcommand{\gmads}{{\sf G-MADS}\xspace}
\newcommand{\mvmads}{{\sf MV-MADS}\xspace}
\newcommand{\nomad}{{\sf NOMAD}\xspace}
\theoremstyle{plain}
\newtheorem{definition}{Definition}
\newtheorem{theorem}{Theorem}
\title{{\catmads: Mesh Adaptive Direct Search for constrained blackbox optimization with categorical variables}}
\author{\href{https://www.gerad.ca/Charles.Audet/}{Charles Audet}\thanks{GERAD and Department of Mathematics and Industrial Engineering, Polytechnique Montr\'eal. 6079, Succ. Centre-ville Montr\'eal, Qu\'ebec H3C 3A7, Canada (\href{mailto:Charles.Audet@gerad.ca}{Charles.Audet@gerad.ca},
\href{mailto:youssef.diouane@polymtl.ca}{youssef.diouane@polymtl.ca},
\href{mailto:edward.halle-hannan@polymtl.ca}{edward.halle-hannan@polymtl.ca}, 
\href{mailto:sebastien.le-digabel@polymtl.ca}{sebastien.le-digabel@polymtl.ca},
\href{mailto:christophe.tribes@polymtl.ca}{christophe.tribes@polymtl.ca}) \\%
\hspace*{1.4em}$^\dagger$First and corresponding author.}
    \and
    \href{https://www.gerad.ca/en/people/youssef-diouane}{Youssef Diouane}\footnotemark[1]
    \and
    \href{https://www.gerad.ca/en/people/edward-halle-hannan}{Edward Hall\'e-Hannan}$^{\dagger,}$\footnotemark[1]
    \and
    \href{https://www.gerad.ca/Sebastien.Le.Digabel/}{S\'ebastien Le~Digabel}\footnotemark[1]
    \and
    \href{https://www.gerad.ca/en/people/christophe-tribes}
    {Christophe Tribes}\footnotemark[1]
}
\begin{document}

\maketitle

\begin{center}
    \text{\Large 
    \textbf{Abstract}}
\end{center}

\begin{adjustwidth}{30pt}{30pt}

Solving optimization problems in which functions are blackboxes and variables involve different types poses significant theoretical and algorithmic challenges.
Nevertheless, such settings frequently occur in simulation-based engineering design and machine learning.
This paper extends the Mesh Adaptive Direct Search (\mads) algorithm to address mixed-variable problems with categorical, integer and continuous variables.
\mads  is a robust derivative-free optimization framework with a well-established convergence analysis for constrained quantitative problems.
%
\catmads generalizes \mads  by incorporating categorical variables through distance-induced neighborhoods.
%
%
A detailed convergence analysis of \catmads is provided, with flexible choices balancing computational cost and local optimality strength.
Four types of mixed-variable local minima are introduced, corresponding to progressively stronger notions of local optimality.
\catmads integrates the progressive barrier strategy for handling constraints, and ensures Clarke stationarity.
%
%
%
An instance of \catmads employs cross-validation to construct problem-specific categorical distances.
This instance is compared to state-of-the-art solvers on 32 mixed-variable problems, half of which are constrained.
Data profiles show that \catmads achieves the best results, demonstrating that the framework is empirically efficient in addition to having strong theoretical foundations. \\

\noindent \textbf{Keywords.} Blackbox optimization, derivative-free optimization, mixed-variable, categorical variables, mesh adaptive direct search.

\end{adjustwidth}

\noindent
{\small
\textbf{Funding:} This research is funded by a Natural Sciences and Engineering Research Council of Canada (NSERC) PhD Excellence Scholarship (PGS D), a Fonds de Recherche du Qu\'ebec (FRQNT) PhD Excellence Scholarship and an Institut de l’\'Energie Trottier (IET) PhD Excellence Scholarship, 
as well as by the NSERC discovery grants  
    RGPIN-2020-04448 (Audet),
    RGPIN-2024-05093 (Diouane)
    and RGPIN-2024-05086 (Le~Digabel).
}

%


\section{Introduction} 
\label{sec:intro}

Many engineering and machine learning problems involve functions without explicit expressions, known as \textit{blackbox} functions.
These functions often include \textit{categorical} variables that take qualitative values.
In this context, traditional gradient-based or relaxation-based methods can not be directly employed.
%
This work proposes \catmads,
an extension of the Mesh Adaptive Direct Search (\mads) algorithm~\cite{AuDe2006},
that addresses constrained blackbox problems with quantitative and categorical variables.
%
%
\catmads manages categorical variables with problem-specific distances.
%
%

%


This work covers many real-life applications.
Machine learning (ML) models are characterized by hyperparameters that can be optimized to improve performance.
Hyperparameter tuning, involves optimizing a time-consuming mixed-variable blackbox that trains the model and returns a test score~\cite{G-2022-11, G-2024-33}.
%
In~\cite{solar_paper}, a simulation estimates the energy produced by a solar powerplant for given design variables, such as dimensions of pieces or turbine types.
%
%
An active research field in engineering is the simulation-based design of aircraft~\cite{BuCiDeNaLa2021, SaBaDiLeMoDaNgDe2021}.
%
%
These simulation input geometry and materials, and output energy-based scores via numerical methods~\cite{MaLa2013}.

\subsection*{Notation} Vectors are in bold font.
The letters $\x$ and $\y$ are reserved for points.
Scalars are in normal font.
Subscripts without parenthesis denote variables, \textit{e.g.}, $x_1^{\cat}$ is the first categorical variable.
Subscripts with parentheses are reserved for indexing iterations, \textit{e.g.}, $\x_{(k)}$, or for listing points, \textit{e.g.}, $\x_{(1)}, \x_{(2)}, \ldots, \x_{(s)}$.
For functions, subscripts represent the parameters on which they depend.
Superscripts are reserved for types, with small exceptions in the convergence analysis.
$\mathbb{R}$, $\mathbb{Z}$ and $\mathbb{N}$ denote the set of real numbers, integers and nonnegative integers, respectively.
Other sets denoted using capital letters with either normal or calligraphic font, \textit{e.g.}, A or $\mathcal{A}$, except for the set of known points noted $\mathbb{X}$.
For a neighborhood, the semicolon distinguishes its center from the parameter controlling its size, \textit{e.g}, an open ball of radius $\epsilon$ centered at $\mu$ is noted $\mathcal{B}(\bm{u};\epsilon)$.

\subsection{Scope}
\label{sec:problem_statement}

This work considers inequality constrained mixed-variable blackbox optimization problems of the form  
\begin{equation}
    \min_{ \x \in \Omega } f(\x), 
    \label{eq:formulation}
\end{equation}
where $f:\mathcal{X} \to \overline{\mathbb{R}}$ (with $\overline{\mathbb{R}} = \mathbb{R} \cup \{+ \infty \}$) is the objective function, $\mathcal{X}$ is the domain of the objective and constraint functions,
%
%
$\Omega \coloneq \{ \x \in \mathcal{X} \ : \ g_j( \x) \leq 0, \  j \in J \}$ is the feasible set, $g_j:\mathcal{X} \to \overline{\mathbb{R}}$ is the $j$-th constraint function of the problem, with $j \in J$, and $|J|$ denotes the number of constraints.
%

The objective and the constraint functions are assumed to be provided by blackboxes, defined as in~\cite{AuHa2017}:~``\textit{any process that when provided an input, returns an output, but the inner working of the process is not analytically available}''.
%
In this context, derivatives are unavailable and the function evaluations are performed by executing a potentially time-consuming process.
For example, the estimation of an aircraft drag could be done using a finite element simulation that outputs a drag value, provided a geometry and materials used.
%

The execution of a blackbox can crash or fail to return a valid value because of \textit{hidden constraints}~\cite{ChKe00a}, \textit{e.g.}, an unexpected division by zero.
In that case, the point $\x \in \mathcal{X}$ is assigned $f(\x)=+\infty$.
A point $\x \not \in \mathcal{X}$ can also be assigned $f(\x)=+\infty$ to outline that an \textit{unrelaxable} constraint, which must be satisfied to have a proper blackbox execution, is not satisfied. 
In contrast, the constraint functions characterizing the feasible set $\Omega$ are said to be \textit{relaxable and quantifiable}, since they can be evaluated and return a proper value without being satisfied~\cite{LedWild2015}.

The target optimization problems are mixed-variable, meaning that a variable can be of any type among \textit{continuous} ($\continuous$), \textit{integer} ($\integer$) and \textit{categorical} ($\cat$).
%
Variables of the same type are regrouped into a column vector called a \textit{component} and expressed as
\begin{equation}
    \x^t \coloneq (x^t_1, x^t_2, \ldots, x^t_{n^t}) \in \mathcal{X}^t \coloneq \mathcal{X}^t_1 \times \mathcal{X}^t_2 \times \ldots \times \mathcal{X}^t_{n^t},
    \label{eq:component}
\end{equation}
where $t \in \{ \cat, \integer, \continuous  \}$ refers to the type, $n^t \in \mathbb{N}$ is the number of variables of type $t$, $\mathcal{X}^t$ is the set of type $t$, $x_i^t \in \mathcal{X}_i^t$ denotes the $i$-th variable of type $t$, and $\mathcal{X}_i^t$ contains the bounds of $x_i^t$.  
For convenience, the indices of the variables of type $t$ are regrouped in the set $I^t \coloneq \{1,2,\ldots,n^t\}$.

The continuous $\x^{\continuous} \in \mathcal{X}^{\continuous} \subseteq \mathbb{R}^{n^{\continuous}}$ and integer $\x^{\integer} \in \mathcal{X}^{\integer} \subseteq \mathbb{Z}^{n^{\integer}}$ components are composed of \textit{quantitative} variables that belong to ordered sets with well-defined metrics.
For readability, the integer and continuous components can be grouped into the quantitative component $\x^{\quant} \coloneq \left( \x^{\integer}, \x^{\continuous} \right) \in \mathcal{X}^{\quant} \coloneq \mathcal{X}^{\integer} \times \mathcal{X}^{\continuous}$.
%
%
The integer $\x^{\integer}$ and categorical $\x^{\cat}$ components both contain discrete variables.
However, categorical variables are not quantitative and take qualitative values, known as \textit{categories}.
%
For $i \in I^{\cat}$, the $i$-th categorical variable $x_i^{\cat} \in \mathcal{X}_i^{\cat} \coloneq \{c_1, c_2, \ldots, c_{\ell_i} \}$, where $\ell_i \in \mathbb{N}$ is the number of categories and $c_j$ is a category for $j \in \{1,2,\ldots, \ell_i\}$, $i \in I^{\cat}$.
For example, the blood type of a person takes a category in the set $\{\text{O-},\text{O+},\ldots,\text{AB+}\}$. 
%
The usual distance functions are not suitable for categorical variables, which makes them difficult to treat~\cite{G-2022-11, hastie01statisticallearning}.
%
%
%
%
%
%
The categorical set $\mathcal{X}^{\cat}$ is assumed to be finite and its cardinality is $\left| \mathcal{X}^{\cat} \right| = \prod_{i=1}^{n^{\cat}} \ell_i$.

Finally, a point $\x \in \mathcal{X}$ is partitioned into component by types, such that
\begin{equation}
    \x \coloneq \left(
    \x^{\cat},
    \x^{\integer}, \x^{\continuous}\right)
    \in \mathcal{X}^{\cat} \times \mathcal{X}^{\integer} \times \mathcal{X}^{\continuous}
\end{equation}
where $\mathcal{X}\coloneq\mathcal{X}^{\cat} \times \mathcal{X}^{\integer} \times \mathcal{X}^{\continuous}$ is the domain that consists of Cartesian products of the categorical set $\mathcal{X}^{\cat}$, the integer set $\mathcal{X}^{\integer}$ and the continuous set $\mathcal{X}^{\continuous}$.

Note that this work does not cover problems with \textit{meta variables} that influence the inclusion or bounds of other variables~\cite{G-2022-11, G-2024-33}.
The dimensions and bounds of the problems are fixed.

\subsection{Objectives, contributions and organization}
\label{sec:objectives_organization}

%
%
%
%
%

\mads  is a direct search algorithm that provides an exhaustive convergence theory for constrained quantitative problems~\cite{AuDe2006, AuLeDTr2018}.
Nevertheless, the direct search approach remains relatively underdeveloped for mixed-variable problems with categorical variables, especially in comparison to Bayesian optimization (BO)~\cite{GaHe2020, PeBrBaTaGu2019, RoPaDeClPeGiWy2020} and metaheuristics~\cite{DePrAgMe2002} approaches.
The mixed-variable \mads (\mvmads ), an extension of \mads  for discrete variables, has several convergence guarantees~\cite{AACW09a}.
However, \mvmads  requires the implementation of explicit \textit{user-defined neighborhoods} that define discrete variable neighborhoods.
Such neighborhoods can be tricky or arbitrary to define in a blackbox optimization context with limited information, yet they greatly impact performance.
In practice, user-defined neighborhoods are not user-friendly and may discourage practitioners from using this method.

\catmads is introduced as a framework that generalizes \mads  and improves the management of categorical variables of \mvmads .
%
The research gap addressed is the absence of a robust and rigorous optimization method for mixed-variables problems with constraints.
The main advantage of \catmads over alternative methods is its convergence guarantees and theoretical results, which BO or metaheuristic methods lack.
For instance, \catmads provides necessary optimality conditions for local minima of varying strengths.
This work introduces and adapts different notions of local optimality to develop the theoretical results of \catmads.
In doing so, several characterizations of local minima are proposed and multiple key definitions, such as hypertangent cones, are adapted to the mixed-variable setting.


\catmads addresses the research gap through five objectives.
The first objective is to automatize and generalize the user-defined neighborhood of \mvmads  with distances.
%
The second objective is to integrate the version of \mads that uses a granular mesh~\cite{AuLeDTr2018}, referred to as \gmads, to handle simultaneously integer and continuous variables efficiently and with convergence guarantees.
\mvmads  treats integers as categorical variables, thus it does not capitalize on the properties of these variables.
%
The third objective is to handle the constraints with the progressive barrier~\cite{AuDe09a}, a state-of-the-art technique with convergence guarantees. 
\mvmads uses the simpler extreme barrier strategy that simply rejects infeasible points~\cite{AuDe2006}.
The use of the progressive barrier also motivates adapting the extended poll~\cite{AuDe01a}, a mechanism exploring promising categorical variables and influencing guarantees.
%
%
%
The fourth objective is to develop a comprehensive convergence analysis that improves upon that of \mvmads  and integrates the contributions of the previous objectives.
%
In the convergence analysis, different types of local minima are developed to provide compromises between strength of local optimality and evaluation costs of \catmads.
The final objective is to propose a collection of 32 mixed-variable test problems, half unconstrained and the other half constrained.

The rest of the document is organized as follows. 
Essential elements of \mads  are discussed in \Cref{sec:intro_background} and  related work is covered in \Cref{sec:intro_related}.
The general framework \catmads is presented in \Cref{sec:cat_mads}.
%
%
%
\Cref{sec:convergence} presents the theoretical results of \catmads, along with adapted definitions from nonsmooth optimization and new types of local minima tailored for our setting.
Finally, computational experiments with state-of-the-art solvers are described in \Cref{sec:computation_exp}.
%

\subsection{Essential concepts of \mads}
\label{sec:intro_background}

%
%
%
%
\mads is a direct search method that iteratively seeks candidate points with a strict decrease of the objective function and/or constraint functions~\cite{AuDe2006}.
The candidate points lie on a mesh that discretizes the space with step sizes.
The \mads  algorithm has two main steps.
The \textit{search}, an optional step, evaluates points on the mesh using flexible strategies. 
Although optional, the search allows to incorporate efficient optimization strategies that can significantly improve the global solution quality.
Notable empirical improvements are often achieved with search steps based on quadratic model optimization~\cite{CoLed2011}.
%
%
The key step, called the \textit{poll}, evaluates candidates points around the \textit{incumbent solution}, which is the best solution found since the algorithm was launched.
This is the step that provides convergence guarantees.
If a better solution is found during the search or the poll, then it becomes the incumbent solution and the iteration is said to be successful.
Otherwise, the iteration is unsuccessful, the incumbent solution is unchanged and the mesh is refined to discretize the space more finely.
%

For the continuous case, the mesh has a single step size and the poll is constructed with a set of directions positively spanning the space.
The poll converges with a succession of unsuccessful iterations, such that candidate points are evaluated in an increasingly denser region around a minimum:~this is schematized for $\mathbb{R}^2$ 
in \Cref{fig:MADS}. 

%
\begin{figure}[htb!]
\begin{subfigure}[t]{0.32\textwidth}
\centering
\captionsetup{justification=centering}
\begin{tikzpicture}
\scalebox{1}{
    \begin{scope}
        \draw[step=2cm,gray,very thin] (0,0) grid (4,4);
        \draw[thick] (0,0) rectangle (4,4);

        \draw[dashed, -{Latex[length=3mm, width=2mm]}, thick] (2,2) -- (0,2);   
        \fill[black] (0,2) circle (2pt);       
        \draw[dashed, -{Latex[length=3mm, width=2mm]}, thick] (2,2) -- (2,0);   
        \fill[black] (2,0) circle (2pt);       
        \draw[dashed, -{Latex[length=3mm, width=2mm]}, thick] (2,2) -- (4,4);   
        \fill[black] (4,4) circle (2pt);       

        \fill[black] (2,2) circle (2pt) node[above left] {$\x_{*}$};

        \node[left] at (0.75,2.25) {$\y_{(1)}$};
        \node[below] at (2.5,0.5) {$\y_{(2)}$};
        \node[above right] at (3.25,3.25) {$\y_{(3)}$};
    \end{scope}
}
\end{tikzpicture}
%
  \caption{ }
  \label{fig:MADS_1}
\end{subfigure}
\begin{subfigure}[t]{0.32\textwidth}
\centering
\captionsetup{justification=centering}
%
\begin{tikzpicture}
\scalebox{1}{
    \begin{scope}
        \draw[step=0.5cm,gray,very thin] (0,0) grid (4,4);
        \draw[thick] (1,1) rectangle (3,3);

        \draw[dashed, -{Latex[length=3mm, width=2mm]}, thick] (2,2) -- (1,2);   
        \fill[black] (1,2) circle (2pt);                                        
        \draw[dashed, -{Latex[length=3mm, width=2mm]}, thick] (2,2) -- (2,3);   
        \fill[black] (2,3) circle (2pt);                                        
        \draw[dashed, -{Latex[length=3mm, width=2mm]}, thick] (2,2) -- (2.5,1.5); 
        \fill[black] (2.5,1.5) circle (2pt);                                     

        \fill[black] (2,2) circle (2pt) node[above left] {$\x_{*}$};

        \node[left] at (1,2.25) {$\y_{(4)}$};
        \node[below] at (2.5,1.5) {$\y_{(5)}$};
        \node[above right] at (1.5,3) {$\y_{(6)}$};
    \end{scope}
}
\end{tikzpicture}
%
  \caption{ }
  \label{fig:MADS_2} 
\end{subfigure}
%
\begin{subfigure}[t]{0.32\textwidth}
\centering
\captionsetup{justification=centering}
%
\begin{tikzpicture}
\scalebox{1}{
    \begin{scope}
        \draw[step=0.125cm,gray,very thin] (0,0) grid (4,4);
        \draw[thick] (1.5,1.5) rectangle (2.5,2.5);

        \draw[dashed, -{Latex[length=3mm, width=2mm]}, thick] (2,2) -- (1.5,2);   
        \fill[black] (1.5,2) circle (2pt);
        \draw[dashed, -{Latex[length=3mm, width=2mm]}, thick] (2,2) -- (2,1.5);   
        \fill[black] (2,1.5) circle (2pt);
        \draw[dashed, -{Latex[length=3mm, width=2mm]}, thick] (2,2) -- (2,2.5);   
        \fill[black] (2,2.5) circle (2pt);
        \draw[dashed, -{Latex[length=3mm, width=2mm]}, thick] (2,2) -- (2.5,2);   
        \fill[black] (2.5,2) circle (2pt);

        \fill[black] (2,2) circle (2pt) node[above left] {$\x_{*}$};

        \node[left] at (1.5,2) {$\y_{(7)}$};
        \node[below] at (2,1.5) {$\y_{(8)}$};
        \node[above right] at (1.5,2.5) {$\y_{(9)}$};
        \node[above right] at (2.5,1.75) {$\y_{(10)}$};
    \end{scope}
}
\end{tikzpicture}
%
  \caption{ }
  \label{fig:MADS_3}
\end{subfigure}
\caption{Three consecutive polls of \mads  converging to a local minimum $\x_{*} \in \mathbb{R}^2$.}
\label{fig:MADS}
\end{figure}
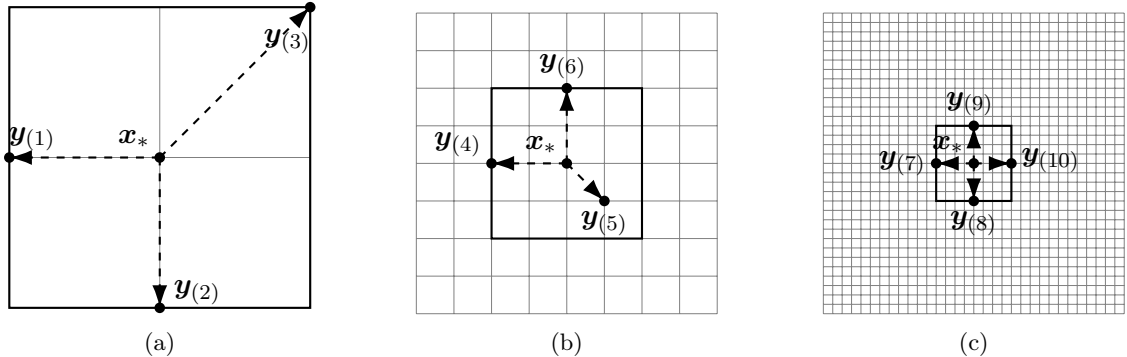
%

%
In \Cref{fig:MADS}, the poll generates candidates points with positive basis that are represented by dotted arrows starting from $\x_{\star}$. 
These candidate points $\y_{(1)}, \y_{(2)}, \ldots, \y_{(10)}$ must lie on the mesh (grey grid) and within the frame (black square). 
The frame represents the delimitation in which the poll is performed and it is greater or equal to the mesh size.
As \mads converges, the mesh and frame sizes shrink toward zero, and the step size shrinks more aggressively to force polls in denser regions: in \Cref{fig:MADS_1} there are 8 potential candidate points, and it increases to 24 in \Cref{fig:MADS_2}. 
The convergence proof is based on the last remark.
For more details on the \mads  algorithm, see Chapter 8 of~\cite{AuHa2017}.
The state-of-the art strategy to handle constraints is the progressive barrier, which is detailed in \Cref{sec:PB}.

\subsection{Related work}
\label{sec:intro_related}

The recent literature on mixed-variable blackbox problems mostly comes from BO with GPs, focusing on similarity measures, referred to as \textit{kernels}, that characterize GPs.
For continuous and integer variables, squared-exponential kernels can be used straightforwardly~\cite{GaHe2020}.
However, categorical variables require additional work.
In~\cite{GaHe2020}, categorical variables are
one-hot encoded
into binary vectors with one binary variable per category.
These binary variables are then continuously relaxed, allowing the squared-exponential kernels to be used.
Relaxed categorical variables are transformed into their original form with a transformation that, for each categorical variable, sets the highest-valued category to one, and all its other categories to zero.
%

In~\cite{ZhTaChAp2020}, each categorical variable is provided with a 2D continuous space for encoding its categories. 
The categories are mapped with a maximum likelihood procedure that positions correlated categories closely and uncorrelated categories farther apart.
%
%
The authors of~\cite{CuLeRoPeDuGl2021} propose a pre-image problem that reconstructs the encoded categorical variables.
%
This is useful in BO, since the GPs are used to form a subproblem that determines the next point to evaluate. 
If categorical variables are encoded, then the solution is also encoded.

An alternative approach to binary vectors employs matrices~\cite{PeBrBaTaGu2019, QiWuJe2008, RoPaDeClPeGiWy2020}. 
Each categorical variable is assigned a matrix whose elements are hyperparameters representing correlations or anti-correlations between its categories.
The hyperparameters are constructed with hypersphere decompositions that parametrize the matrices~\cite{QiWuJe2008}.
A parametrization with more hyperparameters offers greater modeling flexibility, but adjusting its hyperparameters becomes more costly.
See~\cite{BaDiMoLeSa2023} for in-depth presentation of different parametrizations.
This matrix approach implies that the BO subproblem is also mixed-variable.
%
%
%
In~\cite{PeBrBaTaGu2019}, the subproblem is optimized with a genetic algorithm inspired from~\cite{StNaBa1998}.

%
Genetic algorithms are also commonly employed in blackbox optimization, such as the Non-dominated Sorting Genetic Algorithm~\cite{DePrAgMe2002} (NSGA-2).
These algorithms encode points into \textit{chromosomes} that are subject to different operations inspired from the theory of evolution.
For instance, the crossover operation exchange parts of promising chromosomes to heuristically propose new solutions.  
%
%
%
%
Many other metaheuristic methods exist for the problems of interest, such as the mixed-variable Particle Swarm Optimization~\cite{WaZhZh2021}.
For a comprehensive survey, see~\cite{Ta2024}.

\section{The \catmads general framework}
\label{sec:cat_mads}

%
\catmads follows the paradigm of direct search methods:~an initialization step is done at the beginning, then search and poll steps are iteratively performed until a stopping or convergence criteria is met.
At each iteration, the algorithm seeks to improves an incumbent solution $\x_{(k)} \in \mathcal{X}$.
The framework is presented in its general form in \Cref{algo:cat_mads_algo}.
Initially, \catmads is discussed without constraints; they are addressed in \Cref{sec:PB}, after the poll step is detailed.

\begin{algorithm}[htb!]
\small

0.~\textbf{Initialization}. Initialize poll parameters, perform an initial Design of Experiment (DoE) and set $k=0$ \;

1.~\textbf{Opportunistic search} (finitely many mesh points, possibly none). \;




\vspace{0cm}

2.~\textbf{Opportunistic poll}. Perform polling over candidates points in $P_{(k)}=P_{(k)}^{\cat} \cup P_{(k)}^{\quant}$ \;



\vspace{0cm}

3.~\textbf{Opportunistic extended poll} (optional). If Steps 1. and 2. are unsuccessful, perform quantitative \;

\hspace{0.275cm} polls centered at points in $P_{(k)}^{\cat}$ with objective values sufficiently close to $f\left( \x_{(k)} \right)$ \;

\vspace{0cm}

4.~\textbf{Update}. Update poll parameters, check stopping criterion and set $k\leftarrow k+1$ \;



\hspace{0.275cm}  If the iteration is successful, increase mesh and frame parameters and \textbf{GO TO} Step 1. \;

\hspace{0.275cm} Else, decrease mesh and frame parameters, and if the mesh parameters are at their minimum sizes, then \textbf{STOP} 


\caption{The \catmads framework.}
\label{algo:cat_mads_algo}
\end{algorithm}

All the parameters of \catmads are described below.
%
A DoE is done at the initialization, allocating an initial budget of evaluations to explore the domain $\mathcal{X}$ before any optimization.
%
%
%
The DoE must contain at least one initial point $\x_{(0)}\in \mathcal{X}$, but possibly outside $\Omega$.
%

As with \mads, an iteration of \catmads is said to be successful if, at any step, a new incumbent solution $\x_{(k+1)}\in \mathcal{X}$ is found. 
In that case, \catmads proceeds directly to the update step.
This early stopping strategy is said \textit{opportunistic}, and it reduces the number of evaluations required in practice~\cite{AuHa2017}.
%
%
%
An iteration is said to be unsuccessful if no better solution is found. 
%
%
%

%
%
%
%
%

In \catmads, the search has the same purpose than \mads described in \Cref{sec:intro_background}. 
The poll $P_{(k)}$ 
is the union of the categorical poll $P_{(k)}^{\cat}$ and quantitative poll $P_{(k)}^{\quant}$.
%
The categorical poll $P_{(k)}^{\cat}$ replaces the discrete poll of \mvmads , and it uses more flexible neighborhoods based on a distance defined in \Cref{sec:distance_based_neighborhoods}.
%
Furthermore, in comparison to \mvmads , the integer variables are not grouped with the categorical variables, but rather with the continuous variables to form quantitative variables.
This modification allows to use the granular mesh of \gmads ~\cite{AuLeDTr2018} and efficiently treat integer variables.
%
The quantitative poll $P_{(k)}^{\quant}$, detailed in \Cref{sec:mads_granular_mesh}, is an adaptation of the \gmads algorithm for this work.


%
%
%
%
%
%

%
%

If both search and poll steps are unsuccessful, then the extended poll, adapted from~\cite{AuDe01a}, is deployed. 
This step performs quantitative polls around points in the categorical poll $P_{(k)}^{\cat}$ whose objective value is worse than the incumbent value $f(\x_{(k)})$ (unsuccessful), but still considered sufficiently close to $f(\x_{(k)})$.
This assessment of whether a point is sufficiently close to $f(\x_{(k)})$ is described in \Cref{sec:extended_poll}.
%
%
The test is flexible, allowing the user to control its selectivity, including the option to reject all points.
The extended poll is optional, but is labeled as a poll, since it affects the convergence guarantees.
%

\subsection{Distanced-based categorical poll}
\label{sec:distance_based_neighborhoods}

The categorical poll $P_{(k)}^{\cat}$ is characterized by two elements: (i) a categorical distance $d^{\cat} : \mathcal{X}^{\cat} \times \mathcal{X}^{\cat} \to \mathbb{R}_+$ establishing the notion of proximity between categorical components, and (ii) a parameter $m_{(k)}\in \{0,1,\ldots, \left| \mathcal{X}^{\cat} \right| -1  \}$, called the \textit{number of neighbors}, determining how many categorical components are polled.

%
%
The quality of a categorical poll depends greatly on the distance employed.  
Basic categorical distances, such as Hamming distances, only compute mismatches between categories.
\catmads does not prevent using these distances, but they may fail to provide meaningful comparisons. 
For example, if $x^{\cat} \in \{ \text{Red (R), Blue (B), Green (G)}\}$, then $d^{\cat}(\text{R}, \text{B})= 0 = d^{\cat}(\text{R}, \text{G})$.
With such distances, it is not always possible to determine which category is the closest to $\text{R}$.
In \catmads, effort needs to be spent on
constructing problem-specific categorical distances that allow useful comparisons. 
In the computational experiments in \Cref{sec:computation_exp}, the categorical distance is built by fitting a mixed-variable interpolation of the objective function~$f$.
When some knowledge of the problem is available, a user-defined distance assigning smaller values to similar categories may be appropriate.

A value of $m_{(k)}=0$ means 
the categorical poll is empty,
whereas $m_{(k)}= \left| \mathcal{X}^{\cat}\right|-1$ signifies all categorical components, except the current solution $x_{(k)}$, belong to the categorical poll.
%
Higher values allow more categorical component to be evaluated in the categorical poll,
but at greater computational costs. 
%
The number of neighbors can be based by default on the number of categorical components in the problem, \textit{e.g.}, $m_{(k)} = \left\lfloor  \left|\mathcal{X}^{\cat} \right|/2 \right\rfloor$, to scale with the problem.
This is ultimately a user choice 
that should take into account the categorical distance.
For instance, if mismatching-based distances are employed, higher values of $m_{(k)}$ may be preferable, as each neighbor is selected with limited information.
In contrast, when more sophisticated problem-specific distances are used, then each neighbor is chosen in more informed manner, which may justify using lower values of $m_{(k)}$.

The following definition introduces the structure establishing neighborhoods for categorical variables. 
%

%




\begin{definition}[Distance-induced neighborhood]
For a given number
$m \in \{0,1,\ldots, \left| \mathcal{X}^{\cat} \right| -1 \}$, 
the {\em distance-induced neighborhood} 
$\mathcal{N} \left( \x^{\cat};m \right) \subseteq \mathcal{X}^{\cat}$
is the set of 
$m+1$ 
closest categorical components to the categorical component $\x^{\cat} \in \mathcal{X}^{\cat}$ with respect to a categorical distance $d^{\cat}:\mathcal{X}^{\cat} \times \mathcal{X}^{\cat} \to \mathbb{R}_+$.

\medskip

\noindent 
%
Formally, $\left|\mathcal{N} \left( \x^{\cat};m \right)\right| = m+1$, and for any pair $\bm{u}, \bm{v} \in \mathcal{X}^{\cat}$ such that $\bm{u} \in \mathcal{N}\left(\x^{\cat};m \right)$ and $\bm{v} \not\in \mathcal{N}\left(\x^{\cat};m \right)$, the distance $d^{\cat}$ satisfies $d^{\cat} \left(\x^{\cat}, \bm{u}\right)  \leq d^{\cat} \left(\x^{\cat}, \bm{v}\right)$.

\label{def:distance_based_neighborhood}
\end{definition}

By definition, a categorical component $\x^{\cat} \in \mathcal{X}^{\cat}$ belongs to its own distance-induced neighborhood: $\x^{\cat} \in \mathcal{N}(\x^{\cat};m)$ $\text{for any } m \in \{0,1,\ldots, \left|\mathcal{X}^{\cat}\right| \}$.
The definition generalizes user-defined neighborhoods from~\cite{AACW09a}, since it suffices to construct an appropriate categorical distance inducing the user-defined neighborhoods. 
%
The categorical poll is defined from distance-induced neighborhoods.

\begin{definition}[Categorical poll]
For a given incumbent solution $\x_{(k)}=\left(\x_{(k)}^{\cat}, \x_{(k)}^{\quant} \right) \in \mathcal{X}$, the categorical poll $P_{(k)}^{\cat} \subset \mathcal{X}$ contains points whose categorical components belong to the distance-induced neighborhood $\mathcal{N}\left(\x_{(k)}^{\cat}; m_{(k)}\right)$, such that
\begin{equation*}
    P_{(k)}^{\cat} \ \coloneq \ \left( \mathcal{N}\left( \x_{(k)}^{\cat}; m_{(k)} \right) \setminus \{ \x_{(k)}^{\cat} \} \right) \times \{ \x_{(k)}^{\quant} \} \ \subset \ \mathcal{X}
\end{equation*}
where $\left| P_{(k)}^{\cat} \right| = m_{(k)}$.

\label{def:categorical_poll}
\end{definition}

%
%


In \Cref{def:categorical_poll}, the categorical poll contains points of the domain, however the incumbent quantitative component $\x_{(k)}^{\quant} \in \mathcal{X}^{\quant}$ is fixed so that categorical polls effectively modify categorical components of incumbent solutions.
%
%

\subsection{Quantitative poll and mesh}
\label{sec:mads_granular_mesh}

%
%

%
In \gmads, the mesh discretizes the quantitative space, determines the potential points 
that can be generated by the quantitative poll.
The mesh also controls the spacing between the incumbent solution and the points in these polls.
%
%
%
A step size is assigned to each variable in order to control its granularity.
For this work, the granular mesh allows to tackle simultaneously continuous and integer variables, referred to as the quantitative ones.
The following redefines the granular mesh in~\cite{AuLeDTr2018}.

\begin{definition}[Quantitative mesh~\cite{AuLeDTr2018}]
For a given iteration $k \geq 0$, the {\em quantitative mesh}, centered at the quantitative component $\x_{(k)}^{\quant} \in \mathcal{X}^{\quant}$, discretizes the quantitative set $\mathcal{X}^{\quant}$, as follows
\begin{equation}
    M_{(k)}^{\quant} \, \coloneq \left\{ \x_{(k)}^{\quant} +  \diag\left( \bm{\delta}_{(k)} \right) \z \, : \, \z \in \mathbb{Z}^{n^{\quant}}  \right\} \ \subset \, \mathcal{X}^{\quant}
    \label{eq:quantitative_mesh_compact}
\end{equation}
where $\bm{\delta}_{(k)}\coloneq \left( \delta_{(k),1}, \delta_{(k), 2}, \ldots, \delta_{(k),n^{\quant}} \right) \in \mathbb{R}_+^{n^{\quant}}$ is the mesh size vector and $\delta_{(k),i} \in \mathbb{R}_+$ is the mesh size parameter of the $i$-th quantitative variable $x_{(k),i}^{\quant} \in \mathcal{X}_i^{\quant}$.

\label{def:quantitative_mesh}
\end{definition}

In \Cref{def:quantitative_mesh}, the step size of an integer variable belongs more precisely to set of integers $\mathbb{Z}$ and its minimal value is one.
%
%
%
For clarity, \Cref{eq:quantitative_mesh_compact} from \Cref{def:quantitative_mesh} may be expanded as follows to distinguish integer and continuous variables
\begin{equation*}
    M_{(k)}^{\quant} \coloneq \left\{ \y^{\quant}=\left( \x_{(k)}^{\integer} +  \diag\left( \bm{\delta}_{(k)}^{\integer} \right) \z^{\integer}, \ \x_{(k)}^{\continuous} +  \diag\left( \bm{\delta}_{(k)}^{\continuous} \right) \z^{\continuous} \right) \, : \, \z^{\integer} \in \mathbb{Z}^{n^{\integer}}, \ \z^{\continuous} \in \mathbb{Z}^{n^{\continuous}}  \right\}
\end{equation*}
where $\bm{\delta}_{(k)}^{\integer} \in \mathbb{N}^{n^{\integer}}$ and $\bm{\delta}_{(k)}^{\continuous} \in \mathbb{R}_+^{n^{\continuous}}$ are respectively the integer and continuous mesh size vectors.

In \Cref{def:quantitative_mesh}, the quantitative mesh $M_{(k)}^{\quant}$ is a subset of the quantitative set $\mathcal{X}^{\quant}$, as it represents a discretization of the quantitative variables.
Similarly to the categorical poll, the quantitative poll contains point in the domain. 
Hence, to formulate the quantitative poll on points, this next definition extends \Cref{def:quantitative_mesh} to discretize the domain~$\mathcal{X}$.
%
\begin{definition}[Mesh~\cite{AACW09a}]
For a given iteration $k \geq 0$, the {\em mesh} discretizes the domain $\mathcal{X}$ is formed as Cartesian product of the categorical set and the quantitative mesh, such that $M_{(k)} \coloneq \mathcal{X}^{\cat} \times M_{(k)}^{\quant} \subset \mathcal{X}.$
\label{def:mesh}
\end{definition}

Next, the quantitative poll is defined from the mesh.
This poll uses the standard mechanisms of \mads . 
Mathematically, candidate points, which lie on the mesh, are constructed by translating a quantitative incumbent solution $\x_{(k)}^{\quant}$ with directions from a positive basis.

\begin{definition}[Quantitative poll~\cite{AACW09a}]
For a given incumbent solution $\x_{(k)}=\left(\x_{(k)}^{\cat}, \x_{(k)}^{\quant} \right) \in \mathcal{X}$, the {\em quantitative poll} $P_{(k)}^{\quant} \subset \mathcal{X}$ contains points whose quantitative components are generated from a positive spanning set of directions $\mathcal{D}_{(k)}$, such that
\begin{equation*}
    %
    P_{(k)}^{\quant} \coloneq \left\{ \left(\x^{\cat}, \x_{(k)}^{\quant} + \diag\left( \bm{\delta}_{(k)} \right) \bm{d} \right) \in M_{(k)}   \, : \, \bm{d} \in \mathcal{D}_{(k)}  \right\} \subset \mathcal{X}.
\end{equation*}
%

\noindent Each direction $\bm{d} \in \mathcal{D}_{(k)}$, scaled by the mesh size vector $\bm{\delta}_{(k)}$, must be within the geometry of the frame, characterized by frame size vector
$\bm{\Delta}_{(k)} \geq \bm{\delta}_{(k)}$, such that
\begin{equation}
-\bm{\Delta}_{(k)} \leq \diag\left( \bm{\delta}_{(k)} \right) \bm{d} \leq \bm{\Delta}_{(k)},
\label{eq:direction_inequality}
\end{equation}
%
%
%
%
%
%
%
where $\bm{\Delta}_{(k)}\coloneq \left( \Delta_{(k),1}, \Delta_{(k), 2}, \ldots, \Delta_{(k),n^{\quant}} \right)~\in~\mathbb{R}_{+}^{n^{\quant}}$ and $\Delta_{(k),i}$ is the frame size parameter of the $i$-th quantitative variable $x_{(k),i}^{\quant} \in \mathcal{X}_i^{\quant}$ for $i \in I^{\quant}$.

\label{def:quantitative_poll}
\end{definition}
%

%
%
%
%
%
%
%
%
For the quantitative variables, the management of parameters is done variable-wise in order to consider the variable type and the scale of the variables.
%
The frame sizes of an integer and continuous variables are respectively restricted to the following discrete sets $\Delta_{(k),i}^{\integer} \in \{ a \times 10^b~:~a \in \{1,2,5\}, \ b\in \mathbb{N} \}$, with $\Delta_{(k),i}^{\integer} \geq 1$,  for $i \in I^{\integer}$,
and
$\Delta_{(k),j}^{\continuous} \in \{ a \times 10^b~:~a \in \{1,2,5\}, \ b\in \mathbb{Z} \}$ for $j \in I^{\continuous}$.
%
%
%
%
%
%
%
%
%
%
A step size is always lower than or equal to its corresponding frame size, \textit{i.e.}, $\delta_{(k),i}^t \leq \Delta_{(k),i}^t$.
%
The frame and step sizes parameters
with specific rules detailed in~\cite{AuLeDTr2018}.
%
%
The frame and step sizes are decreased on unsuccessful iterations, and increased on successful ones.
%
%
%
%
A step size must decrease toward zero more rapidly than its frame size to ensure convergence: typically,
it decreases toward zero with an order of the square of its frame size~\cite{AuHa2017, AuLeDTr2018}.
%
%
%
%
The initialization of the frame and step sizes considers the bounds and the initial values of $\x_{(0)}$ or a DoE:
%
%
%
%
%
%
%
see~\cite{AuLedTr2014, AuLeDTr2018} for more details.

%
%
%
By default, \gmads  uses Householder transformations to construct positive spanning set of directions $\mathcal{D}_{(k)}$.
%
At each iteration $k$, a random and normalized vector $\bm{v}_{(k)} \in \mathbb{R}^{n^{\quant}}$ is generated.
From $\bm{v}_{(k)}$, the Householder transformation $I - 2\bm{v}_{(k)}\bm{v}_{(k)}^{\top}$ is done.
Afterwards, each column of the matrix is used to create two opposing directions.
These directions are rescaled with the frame and step sizes parameters to ensure that the inequality in~\eqref{eq:direction_inequality} is respected.
%
This ensures that the poll points belong to the mesh and reside within the frame, as in Figure~\ref{fig:MADS}.

%

%
%

%
%

\subsection{Domination and constraint handling with the progressive barrier}
\label{sec:PB}

%
The strategy to handle the constraints, called the progressive barrier (PB)~\cite{AuDe09a}, is now described.
The PB works with two incumbent solutions, instead of a single one $\x_{(k)} \in \mathcal{X}$ as previously: the feasible incumbent solution $\smash{\x_{\feasible} \in \Omega}$ and the infeasible incumbent solution $\smash{\x_{\infeasible} \in \mathcal{X} \setminus \Omega}$.
For readability, the subscript $k$ is discarded when superscripts $\feasible$ and $\infeasible$ are used.
The key idea of the PB is to perform two polls.
%
The \textit{feasible poll} is centered at
$\smash{\x_{\feasible}}$ and is done as described in previous sections.
The \textit{infeasible poll} is centered at
$\smash{\x_{\infeasible}}$, and this solution is updated to progressively approach feasibility throughout the iterations.

In Problem~\eqref{eq:formulation}, recall that a point $\x \in \mathcal{X}$ is feasible if and only if all the constraints are respected.
%
Hence, a point $\y \in \mathcal{X}$ is infeasible if at least one constraint is not respected, \textit{i.e.}, there exists $j \in J$ with $g_j(\x)>0$.
The infeasibility of a point does not necessarily imply that it is not promising.
In fact, optimal solutions often lie on the boundary of the feasible set.
%
In order to quantify the infeasibility of a point, the constraint violation function $h:\mathcal{X} \to \overline{\mathbb{R}}$ is introduced as follows
\begin{eqnarray}
h(\x) \coloneq 
\begin{cases}
    \begin{array}{ll}
       \sum\limits_{j \in J} \left( \max \{0, g_j(\x) \} \right)^2 &  \text{ if } \x \in \mathcal{X}, \\
       \infty   & \text{ otherwise }
    \end{array}
\end{cases} 
\label{eq:fonction_agg}
\end{eqnarray}
with $h(\x) = 0 \Leftrightarrow \x \in \Omega$ (feasible) and $h(\x)>0 \Leftrightarrow \x \in \mathcal{X} \setminus \Omega$ (infeasible). 
%
A point $\x \not \in \mathcal{X}$ is valued as $h(\x)=\infty$, which conveniently outlines that it is not a solution considered.

The PB introduces a threshold parameter $h_{(k)}^{\text{max}}$, called the \textit{barrier}.
This parameter controls the acceptable infeasibility of $\smash{\x_{\infeasible}}$.
More precisely, the feasible and infeasible solutions are determined with the following expressions:
\begin{equation}
    \x_{\feasible} \in \argmin \left\{ f(\x) \, : \, \x\in \Omega \cap \mathbb{X}  \right\} \quad
    \text{and} \quad
    \x_{\infeasible} \in \argmin \left\{ f(\x) \, : \, 0 < h(\x) \leq h_{(k)}^{\text{max}}, \, \x \in \mathbb{X}   \right\}
\label{eq:best_feasible_and_infeasible}
\end{equation}
where $\mathbb{X} \subset \mathcal{X}$, called the \textit{set of known points}, is the set of points whose objective and constraint functions were evaluated since the algorithm was launched.
%
%
The barrier is initialized at $h_{(0)}^{\text{max}}=\infty$ and 
it is reduced iteratively toward zero.

The update of the barrier and the mesh parameters is done with notions of dominance~\cite{AuHa2017}.
%
%
\begin{definition}[Feasible and infeasible dominance]
The point $\x \in \Omega$ is said to {\em dominate} $\y \in \Omega$, denoted $\x \prec_f \y$, if $f(\x) < f(\y)$.
The point $\x \in \mathcal{X} \setminus \Omega$ is said to dominate $\y \in \mathcal{X} \setminus \Omega$, denoted $\x \prec_h \y$, if $f(\x) \leq f(\y)$ and $h(\x) \leq h(\y)$ with a least one strict inequality.

\label{def:dominations}
\end{definition}
In the previous sections without constraints, an iteration is either successful or unsuccessful.
In the constrained case, a successful iteration is divided in two cases based on \Cref{def:dominations}:
\begin{enumerate}
    \item an iteration is \textit{dominating} if either a candidate point $\y \in \Omega$ with $\y \prec_f \x_{\feasible}$ or a candidate point $\y \in \mathcal{X} \setminus \Omega$ with $\smash{\y \prec_{h} \x_{\infeasible}}$ is found at any step.
    The barrier is updated as $h_{(k+1)}^{\text{max}} \coloneq h(\x_{\infeasible})$, and the mesh and frame sizes parameters are increased.

    \item an iteration is \textit{improving} if it is not dominating and if a candidate point $\y \in \mathcal{X} \setminus \Omega$ that improves the constraint violation function, such that $0<h(\y)<h(\x_{\infeasible})$ is found at the end of an iteration. 
    In this case, the mesh and frame parameters are unchanged, and the barrier is updated after going through all the steps with $h_{(k+1)}\coloneq \max \{ h(\bm{v})~:~h(\bm{v})<h(\x_{\infeasible}),~\bm{v}\in \mathbb{X}\}$.

\end{enumerate}
The opportunistic strategy is reused with the PB, however it can only be applied to dominating iterations.
An iteration is unsuccessful if it is not dominating or improving.
In that case, the parameters of \gmads  are decreased and the barrier is unchanged.

%

%
With the PB, the poll set $P_{(k)}$ from \Cref{algo:cat_mads_algo} becomes
$P_{\feasible} \cup P_{\infeasible}$,
where the feasible poll set $P_{\feasible}$ and infeasible poll set $P_{\infeasible}$ are respectively centered at the feasible and infeasible solutions.
The infeasible and feasible polls both have their own quantitative and categorical polls.
The feasible and infeasible categorical polls have their own number of neighbors.
Similarly, the feasible and infeasible quantitative polls have their own set of directions.

The integration of the PB within \catmads is achieved by 1)~considering improving iterations, 2)~performing two polls instead of one and 3)~generalizing the extended poll from~\cite{AuDe01a} to use the PB instead of the extreme barrier, as detailed in the next section.

%
%
%

\subsection{Extended poll}
\label{sec:extended_poll}

The extended poll
is optionally performed following unsuccessful search and poll steps.
\Cref{subfig:extended_poll_example} illustrates the extended poll on the \text{RG example} in which the red and green planes represent two quantitative variables in $\mathbb{R}^2$ corresponding to the two colors (categories).
\begin{figure}[htb!]
\captionsetup[subfigure]{skip=0pt}
\captionsetup{skip=0pt} 
\begin{subfigure}[t]{0.45\textwidth}
\centering
  \scalebox{0.9}{\begin{tikzpicture}

    \definecolor{myblue}{RGB}{51,171,255}   
    \definecolor{myred}{RGB}{236,87,87}     
    \definecolor{mygreen}{RGB}{112,219,112} 

    \def\PlaneShape{(-3,-1) -- (4,-1) -- (5,2) -- (-2,2) -- cycle;}

    \newcommand{\DrawGrid}{%
        \foreach \x in {-3,-2.5,-2,-1.5,-1,-0.5,0,0.5,1,1.5,2,2.5,3,3.5} {
            \draw[black!50] ({\x+0.5},-1) -- ({\x+1.5},2);
        }
        \foreach \y in {-1,-0.5,0,0.5,1,1.5,2} {
            \draw[black!50] (-3,\y) -- (5,\y);
        }
    }

    \fill[myred,opacity=0.4] \PlaneShape;
    \draw[black, thick] \PlaneShape;
    \begin{scope}
        \clip \PlaneShape;
        \DrawGrid
    \end{scope}

    \begin{scope}[shift={(0,4.2)}]
        \fill[mygreen,opacity=0.4] \PlaneShape;
        \draw[black, thick] \PlaneShape;
        \begin{scope}
            \clip \PlaneShape;
            \DrawGrid
        \end{scope}
    \end{scope}
    
    \begin{scope}
        \clip \PlaneShape;

        \begin{scope}[rotate around={25:(-0.2,0)}]
        \draw[black] (-0.2,0) ellipse (1.2 and 0.6);
        \node at (0.75,0.35) {6};
        \draw[black] (-0.2,0) ellipse (2 and 1);
        \node at (1.05,0.75) {12};
        \draw[black] (-0.2,0) ellipse (3 and 1.5);
        \node at (1.4, 1.25) {24};
        %
        \end{scope}
        
    \end{scope}

    \begin{scope}[shift={(0,4.2)}]
        \clip \PlaneShape;
        \begin{scope}[rotate around={25:(3,4.8-4.3)}]
            \draw[black] (3,4.8-4.3) ellipse (0.6 and 0.25); 
            \draw[black] (3,4.8-4.3) ellipse (1.2 and 0.5);
            \draw[black] (3,4.8-4.3) ellipse (2.2 and 1);
            \draw[black] (3,4.8-4.3) ellipse (4 and 1.4);
        \end{scope}
        \node at (1.75, 1.5) {6.3};
        \node at (2.3, 1.25) {6.2};
        \node at (2.9, 1) {6.1};
        \node at (3.4, 0.85) {5};
    \end{scope}
    
    \fill[black] (-0.2,0) circle (2pt);
    \node[below right] at (-0.575, -0.05) {\large $\x_{(k)}$};
    
    \fill[black] (-0.2,4.2) circle (2pt);
    \node[below right] at (-0.1,4.25) {\large $\y$};
    
    \fill[black] (3,4.7) circle (2pt);
    \node[below right] at (3,4.9) {\large $\z$};
    
    \draw[dashed,-{Latex[length=3mm, width=2mm]},thick] (-0.2,0) -- (-0.2,4.1);
    \draw[dashed,-{Latex[length=3mm, width=2mm]},thick] (-0.2,4.2) -- (3,4.7);

     \def\parallelogram(#1,#2){%
        \draw (#1-0.8333,#2-0.5) -- 
              (#1+0.1666,#2-0.5) -- 
              (#1+0.5,#2+0.5) -- 
              (#1-0.5,#2+0.5) -- 
              cycle;
    }

    \parallelogram(0,0)
    \draw[dashed,-{Latex[length=1.25mm, width=2mm]},thick] (-0.2,0) -- (0.35-0.075,0);
    \fill[black] (0.35,0) circle (2pt);
    
    \draw[dashed,-{Latex[length=1.25mm, width=2mm]},thick] (-0.2,0) -- (-0.5+0.04,0.5-0.05);
    \fill[black] (-0.5,0.5) circle (2pt);
    
    \draw[dashed,-{Latex[length=1.25mm, width=2mm]},thick] (-0.2,0) -- (-0.85+0.06, -0.5+0.03);
    \fill[black] (-0.85,-0.5) circle (2pt);

\end{tikzpicture}}
  \medskip
  \subcaption{Extended poll initiated at $\y \in P_{(k)}^{\cat}$, successfully terminating at $\z$ where $4=f(\z) < f(\x_{(k)})=5$.}
  \label{subfig:extended_poll_example}
\end{subfigure}
\hspace{0.25cm}
\begin{subfigure}[t]{0.45\textwidth}
\centering
    \scalebox{0.9}{\begin{tikzpicture}
    \definecolor{myblue}{RGB}{51,171,255}   
    \definecolor{myred}{RGB}{236,87,87}     
    \definecolor{mygreen}{RGB}{112,219,112} 

    \def\PlaneShape{(-3,-1) -- (4,-1) -- (5,2) -- (-2,2) -- cycle;}

    \newcommand{\DrawGrid}{%
        \foreach \x in {-3,-2.5,-2,-1.5,-1,-0.5,0,0.5,1,1.5,2,2.5,3,3.5} {
            \draw[black!50] ({\x+0.5},-1) -- ({\x+1.5},2);
        }
        \foreach \y in {-1,-0.5,0,0.5,1,1.5,2} {
            \draw[black!50] (-3,\y) -- (5,\y);
        }
    }

    \fill[myred,opacity=0.4] \PlaneShape;
    \draw[black, thick] \PlaneShape;
    \begin{scope}
        \clip \PlaneShape;
        \DrawGrid
    \end{scope}
    
    \begin{scope}[shift={(0,4.2)}]
        \fill[mygreen,opacity=0.4] \PlaneShape;
        \draw[black, thick] \PlaneShape;
        \begin{scope}
            \clip \PlaneShape;
            \DrawGrid
        \end{scope}
    \end{scope}

    \fill[black] (-0.2,0) circle (2pt);
    \node[below right] at (-0.35,0) {\Large $\x$};
    
    \fill[black] (-0.2,4.2) circle (2pt);
    \node[below right] at (-0.5,4.7) {\large $\y_{(0)}$};

    \fill[black] (-0.2+0.55,4.2) circle (2pt);
    %

    \node[below right] at (0.75, 4.35) {\LARGE $\ldots$};

     \fill[black] (-0.2+2.525,4.2) circle (2pt);
    %

    \node[below right] at (-0.2+3, 5.2) {\large $\y_{(s)}$};
     \fill[black] (3,4.7) circle (2pt);

    \draw[dashed,-{Latex[length=3mm, width=2mm]},thick] (-0.2,0) -- (-0.2,4.1);

    \def\parallelogram(#1,#2){%
        \draw (#1-0.8333,#2-0.5) -- 
              (#1+0.1666,#2-0.5) -- 
              (#1+0.5,#2+0.5) -- 
              (#1-0.5,#2+0.5) -- 
              cycle;
    }
    \parallelogram(0,4.2)
    \parallelogram(2.5,4.2)
    \parallelogram(3.16666,4.7)

    \draw[dashed,-{Latex[length=1.25mm, width=2mm]},thick] (-0.2,4.2) -- (0.35,4.2);
     \draw[dashed,-{Latex[length=1.25mm, width=2mm]},thick] (-0.2,4.2) -- (-0.7,4.2);

    \draw[dashed,-{Latex[length=1.25mm, width=2mm]},thick] (-0.2+2.5,4.2) -- (0.35+2.66,4.2+0.5);
     \draw[dashed,-{Latex[length=1.25mm, width=2mm]},thick] (-0.2+2.5,4.2) -- (-0.7+2.35,4.2-0.5);

     \draw[dashed,-{Latex[length=1.25mm, width=2mm]},thick] (3,4.7) -- (3-0.33,4.7+0.5);
     \draw[dashed,-{Latex[length=1.25mm, width=2mm]},thick] (3,4.7) -- (3+0.33,4.7-0.5);
    
\end{tikzpicture}}
    \medskip
  \subcaption{The sequence of extended poll points initiated at $y = \y_{(0)}$, and terminating at $\z= \y_{(s)}$.}
  \label{subfig:extended_poll_quantpolls}
\end{subfigure}
\caption{The RG example for the extended poll with the mesh in gray.}
\label{fig:extended_poll_all}
\end{figure}
In the figure, the incumbent solution $\x_{(k)}$ is in the red plane.
Both the quantitative poll and categorical poll at $\y \in P_{(k)}^{\cat}$ in the green plane fail to improve $\x_{(k)}$. 
The extended poll initiated at  $\y \in P_{(k)}^{\cat}$ generates a series of quantitative polls in the green plane, successfully terminating at $\z$ with $4=f(\z) < f(\x_{(k)})=5$.

An important drawback of the extended poll is its potential high costs, especially when dealing with many variables.
To mitigate these costs, points from the categorical poll that undergo quantitative polls are selected based on a test.
%
%
The test accepts points 
whose objective value is worse, but sufficiently close to an incumbent value.

\begin{definition}[$\xi$-extended poll trigger]
For a given parameter $\xi \in \overline{\mathbb{R}}$, 
the {\em $\xi$-extended poll trigger function} 
$t_{\xi}:\mathcal{X} \times \mathcal{X} \to \{0,1\}$ 
at the unsuccessful poll point $\y \in P_{(k)}^{\cat} = P_{\feasible}^{\cat} \cup P_{\infeasible}^{\cat}$ is defined as
\begin{equation}
    t_{\xi} \left( \x_{(k)}, \y\right) \coloneq 
        \left\{
        \begin{array}{cl}
            1  &\mbox{ if } \ 0 \leq f\left( \y \right) - f \left( \x_{(k)} \right) \leq \xi \left| f \left( \x_{(k)} \right) \right|,\\
            0 &\mbox{ otherwise. } \\  
        \end{array}
        \right.
\label{eq:extended_poll_test}
\end{equation}
%
\label{def:extended_poll_test}
\end{definition}

%
%
The parameter $\xi \in \overline{\mathbb{R}}$ has the following impact:
\begin{itemize}
    \item if $\xi < 0$, then no point passes the test, since the two inequalities are mutually exclusive;
    

    \item if $\xi = \infty$, then all points trivially pass the test;
    
    %

    \item if $\xi \in \mathbb{R}_+$, the test checks if the relative deterioration is within a ratio of $\xi$. 
    
\end{itemize}
In \catmads, the extended poll can be deactivated by setting $\xi < 0$.
In \mvmads, it is always activated and the default implementation proposed is 
$\xi=0.05$, meaning that a deterioration of 5\% on the best feasible value is accepted.

In this work, the extended poll is generalized with the PB containing two incumbent solutions.
To take into account the PB, the $\xi$-extended poll triggers are performed by considering wether the unsuccessful points are feasible or not. 
If an unsuccessful point $\y \in P_{(k)}^{\cat}$ is feasible, then the $\xi$-extended poll trigger is performed with the feasible incumbent $\x_{\feasible}$.
Otherwise, it is performed with the infeasible solution $\x_{\infeasible}$, and in that case, the point must also respect the barrier to be selected.
Mathematically, the set of selected points undergoing extended polls is expressed as
%
    %
%
\begin{align}
\begin{split}
    \mathcal{E}_{(k)} &\coloneq \left\{ \y \in P_{(k)}^{\cat} \, : \,  \left( t_{\xi} \left( \x_{\feasible}, \y \right)=1 \text{ and } h\left( \y \right) =0 \right) \, \text{or} \,
    \left( t_{\xi} \left( \x_{\infeasible}, \y \right)=1 \text{ and } 0<h\left( \y \right) \leq h_{(k)} \right) \right\}.
\label{eq:selected_points}
\end{split}
\end{align}
where $h(\y)=0$ signifies that $\y$ is feasible.

%
%
%
%
%
    %
    %

Once selected for the extended poll, a point $\y \in \mathcal{E}_{(k)}$ undergoes a finite sequence of quantitative polls initiated around it.
A quantitative poll is performed until a new incumbent solution is opportunistically found, or until it does not produce a dominating point.
This is formalized in the next definition.
%
%
%
\begin{definition}[Extended poll initiated at $\y$~\cite{AuDe01a}]
For a given iteration $k>0$,
the {\em extended poll initiated at $\y \coloneq \y_{(0)} \in \mathcal{E}_{(k)}$} produces a finite sequence of iterates 
$\y_{(0)}, \y_{(1)}, \y_{(2)}, \ldots, \y_{(s)}$, such that 
$\y_{(j)} \prec \y_{(j-1)} \, 
\text{for all } j \in S \coloneq \{1,2,\ldots,s\}$ and $\x_{(k)} \prec \y$.
The operator $\prec$ and the incumbent solution $\x_{(k)}$ are either:
\begin{itemize}
    \item the feasible dominance $\prec_{f}$ and the feasible incumbent solution $\x_{\feasible} \in \Omega$, if $\y \in \Omega$;

    \item or the infeasible dominance $\prec_{h}$ and the infeasible incumbent solution $\x_{\infeasible} \in \mathcal{X} \setminus \Omega$, if $\y \in \mathcal{X} \setminus \Omega$. 
\end{itemize}
\medskip
The point $\z \coloneq \y_{(s)} \in \mathcal{X}$, called an {\em endpoint}, is either a new incumbent solution or the quantitative poll at $\z$
fails to produce a dominating point, such that
$$\bm{w} \not \prec \z,   \, \text{for all } \bm{w} \in \left\{ \left(\y^{\cat}, \z^{\quant} + \text{diag} \left( \bm{\delta}_{(k)} \right) \bm{d} \right) \, : \, \bm{d} \in \mathcal{D} \left( \z \right) \right\}$$
where $\mathcal{D} \left( \z \right)$ is the set of directions used for $\z$.

\label{def:extended_poll_point}
\end{definition}

From \Cref{def:extended_poll_point}, it follows that, for a given iteration $k>0$, an extended poll initiated at $\y \in \mathcal{E}_{(k)}$  generates an union of quantitative polls performed at $\y_{(0)}, \y_{(1)}, \ldots, \y_{(s)}$, denoted $\mathcal{E}_{(k)}(\y)$ and expressed as 
{\small
\begin{align*}
    \mathcal{E}_{(k)} \left( \y \right) \coloneq \bigcup_{j \in S} \Big\{ \left( \y^{\cat}, \y_{(j)}^{\quant} + \diag \left( \bm{\delta}_{(k)} \right) \bm{d} \right) \in M_{(k)}  : \
    &\bm{d} \in \mathcal{D} \left( \y_{(j)} \right), \\[-0.25cm]
    &\y_{(j)}^{\quant} = \y_{(j-1)}^{\quant} + \diag \left( \bm{\delta}_{(k)} \right) \bar{\bm{d}} \text{ for some } \bar{\bm{d}} \in  \mathcal{D} \left( \y_{ \left(j-1 \right)} \right), \\
    &\y_{(j)} \prec \y_{ \left(j-1 \right)}  \Big\} \subset \mathcal{X}.
\end{align*}
}

In \Cref{subfig:extended_poll_quantpolls}, the RG example is reused to illustrate \Cref{def:extended_poll_point}.
The sequence of polls is initiated at $\y \in P_{(k)}^{\cat}$ and yields improvements until the quantitative polls produces a new incumbent solution $\z$.
The quantitative polls have their own sets of two directions.
In comparison to quantitative polls in \Cref{def:quantitative_poll}, extended polls do not require a positively spanning set of directions.
This allows using fewer directions, thereby reducing their cost.
In the example, the mesh is not increased with improvements of the extended poll. 
In theory, it could be increased, as long as the extended poll ends with the initial mesh and frame sizes.

From \Cref{def:extended_poll_point} and the sets of selected points in~\eqref{eq:selected_points}, the extended poll $\bar{P}_{(k)}$ at iteration $k$ is defined as the union of extended polls initiated at the selected points, such that
    %
%
\begin{equation}
    \bar{P}_{(k)} \coloneq  \bigcup_{\y \in \mathcal{E}_{(k)}} \mathcal{E}_{(k)} \left( \y  \right)
\label{eq:extended_poll}
\end{equation}
where $\mathcal{E}_{(k)} \left( \bm{y}  \right)$ in \cref{def:extended_poll_point} adjusts its operator $\prec$ to either $\prec_{f}$ or $\prec_{h}$, depending on whether $\y$ is feasible or not.

The extended poll can be viewed as a mixed-variable poll since it proposes points that modify both the categorical and quantitative variables of incumbent solutions.
In \catmads, these polls offer additional optimization mechanisms that balance evaluation costs and local solution quality.
This becomes apparent in the next section, as they improve convergence guarantees at some additional cost.

\section{Convergence analysis}
\label{sec:convergence}

%

The convergence analysis of \catmads follows the structure used in~\cite{AACW09a,AuDe2006}.
It relies on standard assumptions: an initial point $\x_{(0)} \in \mathcal{X}$ with $f\left(\x_{(0)}\right) \in \mathbb{R}$ is available, and all iterates $\{ \x_{(k)} \}$ lie in a compact set.
The convergence analysis is divided into four parts.
First, different types of local minima of varying strengths are presented.
Second,  \Cref{sec:analysis_refining_subsequence} redefines and shows the existence of a refining subsequence.
Third, convergence results with respect to the continuous variables are derived in \Cref{sec:analysis_calculus}, using notions of nonsmooth calculus.
Finally, \Cref{sec:analysis_interactions} strengthens the convergence results by studying the extended poll.

\subsection{Types of mixed-variable local minima}
\label{sec:local_minima}

%

In the following, an open ball for the continuous variables is denoted $\mathcal{B} \left( \bm{u} ; \epsilon \right) \coloneq \{ \bm{v} \in \mathbb{R}^{n^{\continuous}}  : \, \| \bm{u} - \bm{v} \|_2 < \epsilon \}$, and a neighborhood for integer variables is defined as $\mathcal{I} \left( \bm{u} ; \eta \right) \coloneq \{ \bm{v} \in \mathbb{Z}^{n^{\integer}}  : \, \| \bm{u} - \bm{v} \|_1 \leq \eta\}$.
In the mixed-variable setting, a local minimum is defined as follows.

\begin{definition}[Local minimum]
A point $\x=\left(\x^{\cat}, \x^{\integer}, \x^{\continuous} \right) \in \Omega$
is a {\em mixed-variable local minimum} if there exists a triplet $m \in \mathbb{N}^{*}$, $\eta \in \mathbb{N}^*$, $\epsilon \in \mathbb{R}_+^*$, such that
%
\begin{eqnarray*} 
\begin{array}{ll}    
   f(\x) \leq f(\y), &\text{for all } \y  \hspace{0.1cm} \in \left( \mathcal{N}\left(\x^{\cat}; m\right) \times \mathcal{I} \left( \x^{\integer};\eta \right) \times \mathcal{B}\left(\x^{\continuous}; \epsilon\right) \right) \, \cap \,  \Omega.   \\
   
\end{array}
\end{eqnarray*}

\label{def:complete_local_minimum}
\end{definition}

In \Cref{fig:min_complete}, a local minimum is  schematized for the \textit{RBG example} containing a single categorical variable $x^{\cat} \in \{\text{Red}, \text{Green}, \text{Blue} \}$, a single integer variable $x^{\integer} \in \{0,1,2\}$ and no relaxable constraint, \textit{i.e.}, $\Omega= \mathcal{X}$.
The planes represent the set $\mathcal{X}^{\continuous} \subset \mathbb{R}^{2}$ for fixed categorical and integer variables.
The gray zones depicts continuous balls (unconstrained)
and dotted arrows represent the couples of categorical and integer components involve in the minimum.

\begin{figure}[htb!]
\centering
\scalebox{0.85}{\begin{tikzpicture}

\clip (-1.55, -0.1) rectangle (3.1, 7);

\newcommand{\FACE}[5]{ 
  \node[] (C) at (#1, #2, #3) {}; 
  \fill[#5,opacity=0.4] 
    ($(C) + (#4*0.20,#4,#4)$) -- 
    ($(C) + (0,#4,#4)$) -- 
    ($(C) + (0,#4,0)$) -- 
    ($(C) + (#4*0.20,#4,0)$) -- 
    cycle;
  \draw[-] ($(C) + (#4*0.20,#4,#4)$) -- ($(C) + (0,#4,#4)$);
  \draw[-] ($(C) + (#4*0.20,#4,#4)$) -- ($(C) + (#4*0.20,#4,0)$);
  \draw[-] ($(C) + (0,#4,0)$) -- ($(C) + (0,#4,#4)$);
  \draw[-] ($(C) + (0,#4,0)$) -- ($(C) + (#4*0.20,#4,0)$);
}

\newcommand{\FaceSize}{4}

\definecolor{blueface}{RGB}{51,171,255}
\definecolor{redface}{RGB}{236,87,87}
\definecolor{greenface}{RGB}{112,219,112}

\def\xleft{0}
\def\xmid{1.15}
\def\xright{2.3}

\FACE{\xleft}{2}{0}{\FaceSize}{greenface}
\FACE{\xleft}{0}{0}{\FaceSize}{redface}
\FACE{\xleft}{-2}{0}{\FaceSize}{blueface}
\filldraw[gray, thick, rotate around={-45:(\xleft+\FaceSize*0.10, 0+\FaceSize, \FaceSize/2)}]
  (\xleft+\FaceSize*0.10, 0+\FaceSize, \FaceSize/2) ellipse (6.5pt and 11pt);
\filldraw[gray, thick, rotate around={-45:(\xleft+\FaceSize*0.10, 2+\FaceSize, \FaceSize/2)}] 
  (\xleft+\FaceSize*0.10, 2+\FaceSize, \FaceSize/2) ellipse (6.5pt and 11pt);

\FACE{\xmid}{2}{0}{\FaceSize}{greenface}
\fill[black] ($(\xmid,2,0) + (\FaceSize*0.10,\FaceSize, \FaceSize/2)$) circle (2pt);
\filldraw[gray, thick, rotate around={-45:(\xmid+\FaceSize*0.10, 2+\FaceSize, \FaceSize/2)}] 
  (\xmid+\FaceSize*0.10, 2+\FaceSize, \FaceSize/2) ellipse (6.5pt and 11pt);
\FACE{\xmid}{0}{0}{\FaceSize}{redface}
\filldraw[gray, thick, rotate around={-45:(\xmid+\FaceSize*0.10, 0+\FaceSize, \FaceSize/2)}] 
  (\xmid+\FaceSize*0.10, 0+\FaceSize, \FaceSize/2) ellipse (6.5pt and 11pt);
\node at ($(\xmid,0,0) + (\FaceSize*0.10,\FaceSize, \FaceSize/2)$) {\LARGE $\star$};
\FACE{\xmid}{-2}{0}{\FaceSize}{blueface}

\FACE{\xright}{2}{0}{\FaceSize}{greenface}
\FACE{\xright}{0}{0}{\FaceSize}{redface}
\filldraw[gray, thick, rotate around={-45:(\xright+\FaceSize*0.10, 0+\FaceSize, \FaceSize/2)}]
  (\xright+\FaceSize*0.10, 0+\FaceSize, \FaceSize/2) ellipse (6.5pt and 11pt);
\FACE{\xright}{-2}{0}{\FaceSize}{blueface}
\filldraw[gray, thick, rotate around={-45:(\xright+\FaceSize*0.10, 2+\FaceSize, \FaceSize/2)}] 
  (\xright+\FaceSize*0.10, 2+\FaceSize, \FaceSize/2) ellipse (6.5pt and 11pt);

\draw[dashed,-{Latex[length=1.25mm, width=2mm]},thick] 
  ($(\xmid,0,0) + (\FaceSize*0.10,\FaceSize, \FaceSize/2)$) -- 
  ($(\xmid,2,0) + (\FaceSize*0.10,\FaceSize, \FaceSize/2)$);
\draw[dashed,-{Latex[length=1.25mm, width=2mm]},thick] 
  ($(\xmid,0,0) + (\FaceSize*0.10,\FaceSize, \FaceSize/2)$) -- 
  ($(\xleft+0.1,0,0) + (\FaceSize*0.10,\FaceSize, \FaceSize/2)$);
\draw[dashed,-{Latex[length=1.25mm, width=2mm]},thick] 
  ($(\xmid,0,0) + (\FaceSize*0.10,\FaceSize, \FaceSize/2)$) -- 
  ($(\xright,0,0) + (\FaceSize*0.10-0.1,\FaceSize, \FaceSize/2)$);

%
\draw[dashed,-{Latex[length=1.25mm, width=2mm]},thick] 
  ($(\xmid,0,0) + (\FaceSize*0.10,\FaceSize, \FaceSize/2)$) -- 
  ($(\xleft+0.1,2,0) + (\FaceSize*0.10,\FaceSize, \FaceSize/2)$);
\draw[dashed,-{Latex[length=1.25mm, width=2mm]},thick] 
  ($(\xmid,0,0) + (\FaceSize*0.10,\FaceSize, \FaceSize/2)$) -- 
  ($(\xright,2,0) + (\FaceSize*0.10-0.1,\FaceSize, \FaceSize/2)$);

\node[anchor=south, text=gray] at ($(\xright+0.5,2.25,0) + (\FaceSize*0.10, \FaceSize, \FaceSize/2)$) {\Large $\bm{y}$};

%

\node[] at (0.5, 6.5, 0) {\Large $1$};
\node[] at (1.6, 6.5, 0) {\Large $2$};
\node[] at (2.7, 6.5, 0) {\Large $3$};

\end{tikzpicture}}
%
\caption{A local minimum $\bm{{\star}}=\left(\text{Red}, 2, \x^{\continuous}_{\star}\right)$ for the RBG example, where $\x^{\continuous} \in \mathcal{X}^{\continuous} \subset \mathbb{R}^2$, $m=1$ and $\eta=1$. 
}
\label{fig:min_complete}
\end{figure}

The local minimum, as described in \Cref{def:complete_local_minimum}, is not covered by \catmads.
In practice, the evaluations required for reaching optimality could be substantial.
%
%
%
%
To balance computational costs and solution quality, three weaker types of local minima are proposed.
%
%

The new types of local minima are named with a specific nomenclature in which the types of variables are combined by a hyphen 
or separated by a comma.
A hyphen symbolizes that the types of variables are jointly interacting.
For instance, the nomenclature of the local minimum in \Cref{def:complete_local_minimum} is \big[$\cat-\integer-\continuous$\big], since all types of variables are jointly interacting via Cartesian products.
The weaker types of local minima discard some or all of these joint interactions, meaning commas appear in their nomenclature.
Notably, the weakest type of local minima is named [$\cat$, $\integer$, $\continuous$], which signifies that all types of variables are separated.
%
Inspired by~\cite{ToNaTrCa2024}, the \big[$\cat$, $\integer$, $\continuous$\big]  minimum is defined below.

\begin{definition}[{\big[}$\cat$, $\integer$, $\continuous${\big]} local minimum]
A point $\x=\left(\x^{\cat}, \x^{\integer}, \x^{\continuous} \right) \in \Omega$ is said to be a {\em \big[$\cat$, $\integer$, $\continuous$\big] local minimum} if there exists a triplet $m \in \mathbb{N}^*$, $\eta \in \mathbb{N}^*$, $\epsilon \in \mathbb{R}_+^*$, such that

%

\[
\begin{array}{l@{\quad}l@{\;}l@{\quad}l}        f(\x) \leq f(\y), & \text{for all } \y^{\cat} &\in \mathcal{N}\left(\x^{\cat}; m\right) &\text{with } \y = \left(\y^{\cat}, \x^{\integer}, \x^{\continuous} \right) \in \Omega, \\
     f(\x) \leq f(\z), & \text{for all } \z^{\integer} &\in \mathcal{I}\left(\x^{\integer}; \eta\right) &\text{with } \z = \left(\x^{\cat}, \z^{\integer}, \x^{\continuous} \right) \in \Omega, \\
     f(\x) \leq f(\bm{u}), & \text{for all } \bm{u}^{\continuous} &\in \mathcal{B}\left(\x^{\continuous}; \epsilon\right) &\text{with } \bm{u} = \left(\x^{\cat},\x^{\integer},\bm{u}^{\continuous}\right) \in \Omega.
\end{array}
\]

%
%
     
%

\label{def:seperated_local_minimum}
\end{definition}

In \Cref{def:seperated_local_minimum}, each inequality is associated with a specific type of variables, meaning that there is no interaction between different types of variables.
The \big[$\cat$, $\integer$, $\continuous$\big] minimum is visualized for the RBG example in \Cref{subfig:min_component}.
For this type of local minimum, the vertical dotted line represents the point in the distance-induced neighborhood, whereas the horizontal dotted lines represent the integer neighborhood. 
%
%

\begin{figure}[htb!]
%
\begin{subfigure}[t]{0.32\textwidth}
\centering
\scalebox{0.85}{\begin{tikzpicture}

\clip (-1.55, -0.1) rectangle (3.1, 7);

\newcommand{\FACE}[5]{ 
  \node[] (C) at (#1, #2, #3) {}; 
  \fill[#5,opacity=0.4] 
    ($(C) + (#4*0.20,#4,#4)$) -- 
    ($(C) + (0,#4,#4)$) -- 
    ($(C) + (0,#4,0)$) -- 
    ($(C) + (#4*0.20,#4,0)$) -- 
    cycle;
  \draw[-] ($(C) + (#4*0.20,#4,#4)$) -- ($(C) + (0,#4,#4)$);
  \draw[-] ($(C) + (#4*0.20,#4,#4)$) -- ($(C) + (#4*0.20,#4,0)$);
  \draw[-] ($(C) + (0,#4,0)$) -- ($(C) + (0,#4,#4)$);
  \draw[-] ($(C) + (0,#4,0)$) -- ($(C) + (#4*0.20,#4,0)$);
}

\newcommand{\FaceSize}{4} 

\definecolor{blueface}{RGB}{51,171,255}
\definecolor{redface}{RGB}{236,87,87}
\definecolor{greenface}{RGB}{112,219,112}

\def\xleft{0}
\def\xmid{1.15}
\def\xright{2.3}

\FACE{\xleft}{2}{0}{\FaceSize}{greenface}
\FACE{\xleft}{0}{0}{\FaceSize}{redface}
\FACE{\xleft}{-2}{0}{\FaceSize}{blueface}
\fill[black] ($(\xleft,0,0) + (\FaceSize*0.10,\FaceSize, \FaceSize/2)$) circle (2pt);

\FACE{\xmid}{2}{0}{\FaceSize}{greenface}
\fill[black] ($(\xmid,2,0) + (\FaceSize*0.10,\FaceSize, \FaceSize/2)$) circle (2pt);
\FACE{\xmid}{0}{0}{\FaceSize}{redface}
\filldraw[gray, thick, rotate around={-45:(\xmid+\FaceSize*0.10, 0+\FaceSize, \FaceSize/2)}] 
  (\xmid+\FaceSize*0.10, 0+\FaceSize, \FaceSize/2) ellipse (6.5pt and 11pt);
\node at ($(\xmid,0,0) + (\FaceSize*0.10,\FaceSize, \FaceSize/2)$) {\LARGE $\star$};
\FACE{\xmid}{-2}{0}{\FaceSize}{blueface}

\FACE{\xright}{2}{0}{\FaceSize}{greenface}
\FACE{\xright}{0}{0}{\FaceSize}{redface}
\fill[black] ($(\xright,0,0) + (\FaceSize*0.10,\FaceSize, \FaceSize/2)$) circle (2pt);
\FACE{\xright}{-2}{0}{\FaceSize}{blueface}

\draw[dashed,-{Latex[length=1.25mm, width=2mm]},thick] 
  ($(\xmid,0,0) + (\FaceSize*0.10,\FaceSize, \FaceSize/2)$) -- 
  ($(\xmid,2,0) + (\FaceSize*0.10,\FaceSize-0.1, \FaceSize/2)$);

\draw[dashed,-{Latex[length=1.25mm, width=2mm]},thick] 
  ($(\xmid,0,0) + (\FaceSize*0.10,\FaceSize, \FaceSize/2)$) -- 
  ($(\xleft,0,0) + (\FaceSize*0.10,\FaceSize, \FaceSize/2)$);

\draw[dashed,-{Latex[length=1.25mm, width=2mm]},thick] 
  ($(\xmid,0,0) + (\FaceSize*0.10,\FaceSize, \FaceSize/2)$) -- 
  ($(\xright,0,0) + (\FaceSize*0.10-0.1,\FaceSize, \FaceSize/2)$);

\node[anchor=south] at ($(\xmid+0.16,2.0,0) + (\FaceSize*0.10,\FaceSize, \FaceSize/2)$) {\Large $\y$};
\node[anchor=east, text=gray] at ($(\xmid-0.15,-0.4,0) + (\FaceSize*0.10,\FaceSize, \FaceSize/2)$) {\Large $\bm{u}$};
\node[anchor=west] at ($(\xright,0,0) + (\FaceSize*0.10,\FaceSize, \FaceSize/2)$) {\Large $\z$};

\node[] at (0.5, 6.5, 0) {\Large $1$};
\node[] at (1.6, 6.5, 0) {\Large $2$};
\node[] at (2.7, 6.5, 0) {\Large $3$};

\end{tikzpicture}}
%
\subcaption{
\big[$\cat$, $\integer$, $\continuous$\big] local minimum.
}
\label{subfig:min_component}
\end{subfigure}
%
\captionsetup[subfigure]{justification=centering}
\begin{subfigure}[t]{0.32\textwidth}
\centering
\scalebox{0.85}{\begin{tikzpicture}

\clip (-1.55, -0.1) rectangle (3.1, 7);

\newcommand{\FACE}[5]{ 
  \node[] (C) at (#1, #2, #3) {}; 
  \fill[#5,opacity=0.4] 
    ($(C) + (#4*0.20,#4,#4)$) -- 
    ($(C) + (0,#4,#4)$) -- 
    ($(C) + (0,#4,0)$) -- 
    ($(C) + (#4*0.20,#4,0)$) -- 
    cycle;
  \draw[-] ($(C) + (#4*0.20,#4,#4)$) -- ($(C) + (0,#4,#4)$);
  \draw[-] ($(C) + (#4*0.20,#4,#4)$) -- ($(C) + (#4*0.20,#4,0)$);
  \draw[-] ($(C) + (0,#4,0)$) -- ($(C) + (0,#4,#4)$);
  \draw[-] ($(C) + (0,#4,0)$) -- ($(C) + (#4*0.20,#4,0)$);
}

\newcommand{\FaceSize}{4} 

\definecolor{blueface}{RGB}{51,171,255}
\definecolor{redface}{RGB}{236,87,87}
\definecolor{greenface}{RGB}{112,219,112}

\def\xleft{0}
\def\xmid{1.15}
\def\xright{2.3}

\FACE{\xleft}{2}{0}{\FaceSize}{greenface}
\FACE{\xleft}{0}{0}{\FaceSize}{redface}
\FACE{\xleft}{-2}{0}{\FaceSize}{blueface}
\fill[black] ($(\xleft,0,0) + (\FaceSize*0.10,\FaceSize, \FaceSize/2)$) circle (2pt);

\FACE{\xmid}{2}{0}{\FaceSize}{greenface}
\fill[black] ($(\xmid,2,0) + (\FaceSize*0.10,\FaceSize, \FaceSize/2)$) circle (2pt);
\filldraw[gray, thick, rotate around={-45:(\xmid+\FaceSize*0.10, 2+\FaceSize, \FaceSize/2)}] 
  (\xmid+\FaceSize*0.10, 2+\FaceSize, \FaceSize/2) ellipse (6.5pt and 11pt);
\FACE{\xmid}{0}{0}{\FaceSize}{redface}
\filldraw[gray, thick, rotate around={-45:(\xmid+\FaceSize*0.10, 0+\FaceSize, \FaceSize/2)}] 
  (\xmid+\FaceSize*0.10, 0+\FaceSize, \FaceSize/2) ellipse (6.5pt and 11pt);
\node at ($(\xmid,0,0) + (\FaceSize*0.10,\FaceSize, \FaceSize/2)$) {\LARGE $\star$};
\FACE{\xmid}{-2}{0}{\FaceSize}{blueface}

\FACE{\xright}{2}{0}{\FaceSize}{greenface}
\FACE{\xright}{0}{0}{\FaceSize}{redface}
\fill[black] ($(\xright,0,0) + (\FaceSize*0.10,\FaceSize, \FaceSize/2)$) circle (2pt);
\FACE{\xright}{-2}{0}{\FaceSize}{blueface}

\draw[dashed,-{Latex[length=1.25mm, width=2mm]},thick] 
  ($(\xmid,0,0) + (\FaceSize*0.10,\FaceSize, \FaceSize/2)$) -- 
  ($(\xmid,2,0) + (\FaceSize*0.10,\FaceSize, \FaceSize/2)$);

\draw[dashed,-{Latex[length=1.25mm, width=2mm]},thick] 
  ($(\xmid,0,0) + (\FaceSize*0.10,\FaceSize, \FaceSize/2)$) -- 
  ($(\xleft+0.1,0,0) + (\FaceSize*0.10,\FaceSize, \FaceSize/2)$);

\draw[dashed,-{Latex[length=1.25mm, width=2mm]},thick] 
  ($(\xmid,0,0) + (\FaceSize*0.10,\FaceSize, \FaceSize/2)$) -- 
  ($(\xright,0,0) + (\FaceSize*0.10-0.1,\FaceSize, \FaceSize/2)$);

\node[anchor=south, text=gray] at ($(\xmid+0.5,2.25,0) + (\FaceSize*0.10,\FaceSize, \FaceSize/2)$) {\Large $\bm{y}$};
\node[anchor=west] at ($(\xright,0,0) + (\FaceSize*0.10,\FaceSize, \FaceSize/2)$) {\Large $\z$};

\node[] at (0.5, 6.5, 0) {\Large $1$};
\node[] at (1.6, 6.5, 0) {\Large $2$};
\node[] at (2.7, 6.5, 0) {\Large $3$};

\end{tikzpicture}}
%
\subcaption{
\big[$\cat-\continuous$, $\integer$\big] local minimum.
}
\label{subfig:min_partially}
\end{subfigure}
%
\begin{subfigure}[t]{0.32\textwidth}
\centering
%
\scalebox{0.85}{\begin{tikzpicture}

\clip (-1.55, -0.1) rectangle (3.1, 7);

\newcommand{\FACE}[5]{ 
  \node[] (C) at (#1, #2, #3) {}; 
  \fill[#5,opacity=0.4] 
    ($(C) + (#4*0.20,#4,#4)$) -- 
    ($(C) + (0,#4,#4)$) -- 
    ($(C) + (0,#4,0)$) -- 
    ($(C) + (#4*0.20,#4,0)$) -- 
    cycle;
  \draw[-] ($(C) + (#4*0.20,#4,#4)$) -- ($(C) + (0,#4,#4)$);
  \draw[-] ($(C) + (#4*0.20,#4,#4)$) -- ($(C) + (#4*0.20,#4,0)$);
  \draw[-] ($(C) + (0,#4,0)$) -- ($(C) + (0,#4,#4)$);
  \draw[-] ($(C) + (0,#4,0)$) -- ($(C) + (#4*0.20,#4,0)$);
}

\newcommand{\FaceSize}{4}

\definecolor{blueface}{RGB}{51,171,255}
\definecolor{redface}{RGB}{236,87,87}
\definecolor{greenface}{RGB}{112,219,112}

\def\xleft{0}
\def\xmid{1.15}
\def\xright{2.3}

\FACE{\xleft}{2}{0}{\FaceSize}{greenface}
\FACE{\xleft}{0}{0}{\FaceSize}{redface}
\FACE{\xleft}{-2}{0}{\FaceSize}{blueface}
\fill[black] ($(\xleft,0,0) + (\FaceSize*0.10,\FaceSize, \FaceSize/2)$) circle (2pt);
\fill[black] ($(\xleft,2,0) + (\FaceSize*0.10,\FaceSize, \FaceSize/2)$) circle (2pt);

\FACE{\xmid}{2}{0}{\FaceSize}{greenface}
\fill[black] ($(\xmid,2,0) + (\FaceSize*0.10,\FaceSize, \FaceSize/2)$) circle (2pt);
\filldraw[gray, thick, rotate around={-45:(\xmid+\FaceSize*0.10, 2+\FaceSize, \FaceSize/2)}] 
  (\xmid+\FaceSize*0.10, 2+\FaceSize, \FaceSize/2) ellipse (6.5pt and 11pt);
\FACE{\xmid}{0}{0}{\FaceSize}{redface}
\filldraw[gray, thick, rotate around={-45:(\xmid+\FaceSize*0.10, 0+\FaceSize, \FaceSize/2)}] 
  (\xmid+\FaceSize*0.10, 0+\FaceSize, \FaceSize/2) ellipse (6.5pt and 11pt);
\node at ($(\xmid,0,0) + (\FaceSize*0.10,\FaceSize, \FaceSize/2)$) {\LARGE $\star$};
\FACE{\xmid}{-2}{0}{\FaceSize}{blueface}

\FACE{\xright}{2}{0}{\FaceSize}{greenface}
\FACE{\xright}{0}{0}{\FaceSize}{redface}
\fill[black] ($(\xright,0,0) + (\FaceSize*0.10,\FaceSize, \FaceSize/2)$) circle (2pt);
\FACE{\xright}{-2}{0}{\FaceSize}{blueface}
\fill[black] ($(\xright,2,0) + (\FaceSize*0.10,\FaceSize, \FaceSize/2)$) circle (2pt);

\draw[dashed,-{Latex[length=1.25mm, width=2mm]},thick] 
  ($(\xmid,0,0) + (\FaceSize*0.10,\FaceSize, \FaceSize/2)$) -- 
  ($(\xmid,2,0) + (\FaceSize*0.10,\FaceSize, \FaceSize/2)$);
\draw[dashed,-{Latex[length=1.25mm, width=2mm]},thick] 
  ($(\xmid,0,0) + (\FaceSize*0.10,\FaceSize, \FaceSize/2)$) -- 
  ($(\xleft+0.1,0,0) + (\FaceSize*0.10,\FaceSize, \FaceSize/2)$);
\draw[dashed,-{Latex[length=1.25mm, width=2mm]},thick] 
  ($(\xmid,0,0) + (\FaceSize*0.10,\FaceSize, \FaceSize/2)$) -- 
  ($(\xright,0,0) + (\FaceSize*0.10-0.1,\FaceSize, \FaceSize/2)$);

\draw[dashed,-{Latex[length=1.25mm, width=2mm]},thick] 
  ($(\xmid,0,0) + (\FaceSize*0.10,\FaceSize, \FaceSize/2)$) -- 
  ($(\xleft+0.1,2,0) + (\FaceSize*0.10,\FaceSize, \FaceSize/2)$);
\draw[dashed,-{Latex[length=1.25mm, width=2mm]},thick] 
  ($(\xmid,0,0) + (\FaceSize*0.10,\FaceSize, \FaceSize/2)$) -- 
  ($(\xright,2,0) + (\FaceSize*0.10-0.1,\FaceSize, \FaceSize/2)$);

\node[anchor=south, text=gray] at ($(\xmid+0.5,2.25,0) + (\FaceSize*0.10,\FaceSize, \FaceSize/2)$) {\Large $\bm{y}$};
\node[anchor=west] at ($(\xright,2,0) + (\FaceSize*0.10,\FaceSize, \FaceSize/2)$) {\Large $\z$};

\node[] at (0.5, 6.5, 0) {\Large $1$};
\node[] at (1.6, 6.5, 0) {\Large $2$};
\node[] at (2.7, 6.5, 0) {\Large $3$};

\end{tikzpicture}}
%
\subcaption{\big[$\cat-\continuous$, $\cat-\integer$\big] local minimum.
}
\label{subfig:min_mixed}
\end{subfigure}
\caption{Three new types of local minima at  $\bm{{\star}}=\left(\text{Red}, 2, \x^{\continuous}_{\star}\right)$ for the RBG example, where $\x^{\continuous} \in \mathcal{X}^{\continuous} \subset \mathbb{R}^2$, $m=1$ and $\eta=1$.
}
\label{fig:min_all}
\end{figure}

The next type of local minimum is \big[$\cat-\continuous$, $\integer$\big] with joint interactions between categorical and continuous variables.

\begin{definition}[{[}$\cat-\continuous$, int{]} local minimum]
A point $\x=\left(\x^{\cat}, \x^{\integer}, \x^{\continuous} \right) \in \Omega$ is said to be a {\em \big[$\cat-\continuous$, $\integer$\big] local minimum} if there exists a triplet $m \in \mathbb{N}^*$, $\eta \in \mathbb{N}^*$, $\epsilon \in \mathbb{R}^*_+$, such that
\[
\begin{array}{l@{\ }l@{\,}c@{\;}l@{\,}l}         f(\x) \leq f(\y), &\text{for all } &\left(\y^{\cat}, \y^{\continuous}\right)  &\in \mathcal{N}\left(\x^{\cat}; m\right) \times \mathcal{B}\left(\x^{\continuous}; \epsilon\right)  & \text{with} \ \y = \left(\y^{\cat}, \x^{\integer}, \y^{\continuous} \right) \in \Omega,    \\
   f(\x) \leq f(\z), &\text{for all } &\z^{\integer} &\in  \mathcal{I}\left(\x^{\integer}; \eta\right) & \text{with} \ \z = \left(\x^{\cat}, \z^{\integer}, \x^{\continuous} \right) \in \Omega. 
\end{array}
\]

%
   %
    %
   %
%

\label{def:partially_local_minimum}
\end{definition}

The \big[$\cat-\continuous$, $\integer$\big] local minimum type is schematized for the RBG example in \Cref{subfig:min_partially}.
In comparison to the \big[$\cat$, $\integer$, $\continuous$\big] local minimum type,
the \big[$\cat-\continuous$, $\integer$\big] type strengthens optimality with a joint interaction in the first inequality. 
In \Cref{subfig:min_partially}, this interaction
is visualized at $\y$, which has a continuous ball around it.

The last type of local minimum is
\big[$\cat-\continuous$, $\cat-\integer$\big] with interactions between the categorical and continuous variables, and between the categorical and integer variables.
\begin{definition}[{[}$\cat-\continuous$, $\cat-\integer${]} local minimum]
A point $\x=\left(\x^{\cat}, \x^{\integer}, \x^{\continuous} \right) \in \Omega$ is said to be a {\em \big[$\cat-\continuous$, $\cat-\integer$\big] local minimum} if there exists a triplet $m \in \mathbb{N}^*$, $\eta \in \mathbb{N}^*$, $\epsilon \in \mathbb{R}_+^*$, such that
\[
\begin{array}{l@{\ }l@{\,}c@{\;}l@{\,}l}          f(\x) \leq f(\y), &\text{for all } &\left(\y^{\cat}, \y^{\continuous}\right) &\in \mathcal{N}\left(\x^{\cat}; m\right) \times \mathcal{B}\left(\x^{\continuous}; \epsilon\right)  & \text{with} \ \y = \left(\y^{\cat}, \x^{\integer}, \y^{\continuous} \right) \in \Omega,   \\
    f(\x) \leq f(\z), &\text{for all } &\left(\z^{\cat}, \z^{\integer} \right) \ &\in  \mathcal{N}\left(\x^{\cat}; m\right) \times \mathcal{I}\left(\x^{\integer}; \eta\right) & \text{with} \ \z = \left(\z^{\cat}, \z^{\integer}, \x^{\continuous} \right) \in \Omega.
\end{array}
\]

    %
%

%

\label{def:mixed_local_minimum}
\end{definition}
The \big[$\cat-\continuous$, $\cat-\integer$\big] minimum further strengthens the optimality of the \big[$\cat-\continuous$, $\integer$\big] type by adding interactions between categorical and integer variables.
%
%
In \Cref{subfig:min_mixed}, this additional interaction is represented at $\z$, which modifies the categorical and integer variables of $\x_{\star}$.

\subsection{Refining subsequence}
\label{sec:analysis_refining_subsequence}

This part of the convergence analysis studies the behavior of sequences of iterates produced by \catmads.
%
%
As for \gmads ~\cite{AuLeDTr2018}, convergence is established via unsuccessful iteration subsequences in which the step sizes of continuous variables get infinitely fine, and step sizes of integer variables reduce to the value one.
For \catmads, these subsequences must also possess convergent distance-induced neighborhoods. 
As a generalization of \gmads, the convergence of the step and frame size parameters is directly inherited from \gmads. 
%
%
This leads to this key definition of the convergence analysis.
%
\begin{definition}[Refining subsequence and refined point]
Let $U$ be the set of indices of the unsuccessful iterations generated by an instance of \catmads. 
If $K$ is an infinite subset of $U$ such~that
%
         %
         %
         %
%
         %
         %
         %
%
\[
\begin{array}{l@{\,}l@{\,}l@{\ }l@{\,}} 
        &\lim \limits_{k \in K} \x_{(k)} &=& \x_{\star} \qquad\qquad\quad  \text{ for some } \x_{\star} \in \mathcal{X},  \\[0.3cm]
         &\lim \limits_{k \in K} \| \bm{\delta}_{(k)}  \|_2^2 &=& n^{\integer}, \\[0.3cm]
         &\lim \limits_{k \in K} m_{(k)} &=& m_{\star}, \\[0.3cm]
         &\lim \limits_{k \in K} \mathcal{N} \left( \x_{(k)}^{\cat}; m_{(k)} \right)  &=& \mathcal{N}\left(\x_{\star}^{\cat}; m_{\star} \right),
\end{array}
\]
then the subsequence $\{ \x_{(k)} \}_{k\in K}$ of poll centers with $\x_{(k)} \in M_{(k)}$ is said to be a {\em refining subsequence}, and $\x_{\star} \in \mathcal{X}$ is said to be a {\em refined point}. 
\label{def:refining}
\end{definition}
%
In \Cref{def:refining}, the notation $\lim_{k \in K} \| \bm{\delta}_{(k)}  \|_2^2 = n^{\integer}$ is equivalent to $\lim_{k\in K} \delta_{(k),i}^{\integer}=1 $ for all $ i \in I^{\integer}$ and $\lim_{k\in K} \delta_{(k),j}^{\continuous}=0 $ for all $ j \in I^{\continuous}$.
%
%
Moreover, it is the PB that determines if a refining subsequence converges to a feasible refined point or not.
From \Cref{def:refining}, the next \textit{zeroth-order} result of~\Cref{thm:zeroth} is proposed with only the two assumptions. 
Since \catmads is built upon \gmads , the proof of the next theorem is partly given by construction.
The rest of the proof sequentially chooses infinite subsequence that satisfy the requirements of a refining subsequence~\cite{AuDe2006}.
%
\begin{theorem}[Zeroth-order]
Let $\{ \x_{(k)} \}$
    be the sequence of points generated by \catmads
    from an initial point $\x_{(0)} \in \mathcal{X}$ with $f\left(\x_{(0)}\right) \in \mathbb{R}$.
If all iterates $\{ \x_{(k)} \}$ belong to a compact set, then
there exists a refining subsequence $\{ \x_{(k)} \}_{k \in K}$ with corresponding refined point $\x_{\star} \in \mathcal{X}$.

\label{thm:zeroth}
\end{theorem}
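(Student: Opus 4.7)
The plan is to obtain a refining subsequence by a sequence of nested extractions, starting from the mesh‐size result that CatMADS inherits from G‑MADS and then using the finiteness of the categorical set together with compactness to pin down all remaining data.

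First, I would invoke the underlying G‑MADS mesh convergence result from~\cite{AuLeDTr2018}: since CatMADS uses the granular mesh with the prescribed update rules (integer frame sizes confined to $\{a\times 10^b : a\in\{1,2,5\},\, b\in\mathbb{N}\}$ with $\Delta^{\integer}\ge 1$, continuous frame sizes likewise, and step sizes shrinking more aggressively than the frame sizes on unsuccessful iterations), and since the iterates lie in a compact set while the algorithm produces infinitely many iterations, the set $U$ of unsuccessful iteration indices must be infinite, and along an infinite subset $K_0\subseteq U$ one has $\delta_{(k),j}^{\continuous}\to 0$ for every $j\in I^{\continuous}$ and $\delta_{(k),i}^{\integer}\to 1$ for every $i\in I^{\integer}$. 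Equivalently, $\|\bm{\delta}_{(k)}\|_2^2\to n^{\integer}$ on $K_0$. This is the only step that uses the detailed mesh bookkeeping of CatMADS; the rest is purely combinatorial.

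From $K_0$, I would extract three successive subsequences. Because all iterates lie in a compact subset of $\mathcal{X}$ and the continuous component is Euclidean, Bolzano–Weierstrass yields an infinite $K_1\subseteq K_0$ along which $\x_{(k)}\to \x_{\star}$ for some $\x_{\star}\in\mathcal{X}$; on $K_1$ the mesh limit from $K_0$ still holds. Since the categorical set $\mathcal{X}^{\cat}$ and the integer set (intersected with the compact set) are finite, the components $\x_{(k)}^{\cat}$ and $\x_{(k)}^{\integer}$ are eventually equal to $\x_{\star}^{\cat}$ and $\x_{\star}^{\integer}$ along $K_1$. Next, because $m_{(k)}$ takes only finitely many values in $\{0,1,\ldots,|\mathcal{X}^{\cat}|-1\}$, the pigeonhole principle gives an infinite $K_2\subseteq K_1$ on which $m_{(k)}=m_{\star}$ is constant, so trivially $m_{(k)}\to m_{\star}$. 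Finally, once $\x_{(k)}^{\cat}$ and $m_{(k)}$ are both constant along the tail of $K_2$, the distance‑induced neighborhood $\mathcal{N}(\x_{(k)}^{\cat};m_{(k)})$ equals $\mathcal{N}(\x_{\star}^{\cat};m_{\star})$ eventually and hence converges to it. Setting $K\coloneq K_2$ gives the refining subsequence with refined point $\x_{\star}$.

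The only delicate step is the first one, guaranteeing that $K_0$ is infinite and that the mesh limits hold on it. The argument is a standard compactness/impossibility argument for granular MADS: if the continuous step sizes did not have a subsequence tending to zero, the iterates would form a discrete set intersected with a compact region, hence a finite set, contradicting the existence of infinitely many strict improvements in an opportunistic method with barrier updates that refine the mesh on every unsuccessful iteration; the granular update rules then force the integer step sizes to bottom out at $1$ along the same subsequence. All remaining extractions are routine applications of compactness and the finiteness of $\mathcal{X}^{\cat}$ and of the range of $m_{(k)}$, which is why the theorem statement asks for nothing beyond these two standing assumptions.
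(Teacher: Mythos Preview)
Your proposal is correct and follows essentially the same approach as the paper: invoke the G\nobreakdash-MADS mesh convergence result from~\cite{AuLeDTr2018} to obtain an infinite set of unsuccessful indices with $\|\bm{\delta}_{(k)}\|_2^2\to n^{\integer}$, then perform nested subsequence extractions using compactness (for the continuous component) and finiteness (for the categorical component, the integer component, and the range of $m_{(k)}$) to pin down $\x_{\star}$, $m_{\star}$, and the limiting neighborhood. The only cosmetic difference is that the paper explicitly justifies why \catmads inherits the G\nobreakdash-MADS mesh result---namely, because the categorical and extended polls add only finitely many mesh points per iteration---whereas you sketch the underlying compactness/impossibility argument directly; both arrive at the same chain of extractions.
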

\begin{proof}
Let $U$ be the set of indices of unsuccessful iterations generated by \catmads.
For all $k>0$, the intersection of a mesh $M_{(k)}$ and the compact domain $\mathcal{X}$ is finite.
Thus, relative to \gmads , the extended poll introduces only a finite sequence of quantitative polls per iteration, and the categorical poll introduces only a finite number of categorical modifications possible per iteration.
%
%
Therefore, \catmads inherits the following result of \gmads : a sequence of iterates $\{x_{(k)}\}$ generated in a compact set is such that $\liminf_{k \rightarrow \infty} \delta_{(k),i}^{\continuous}=0 \, \text{for each } i \in I^{\continuous}$ and $\liminf_{k \rightarrow \infty} \delta_{(k),j}^{\integer}=0 \, \text{for each } j \in I^{\integer}$ (see Theorems 3.1 and 3.2 in~\cite{AuLeDTr2018}).
%
It follows that there exists an infinite subset of indices $K_1 \subseteq U$ such that $\lim_{k \in K_1} \| \bm{\delta}_{(k)}  \|_2^2 = n^{\integer}$.
Similarly, since the set is compact, there exists an infinite subset of indices $K_2 \subseteq K_1$ such that $\lim_{k \in K_2} \x_{(k)^{\continuous}} = \x_{\star}^{\continuous}$.
Additionally, since the categorical variables, integer variables and number of neighbors belong to finite sets, infinite subsets of indices can be chosen successively as $K_5 \subseteq K_4, \subseteq K_3$, such that $\lim_{k \in K_3} \x_{(k)}^{\integer} = \x_{\star}^{\integer}$, $\lim_{k \in K_4} \x_{(k)}^{\cat} = \x_{\star}^{\cat}$ and $\lim_{k \in K_5} m_{(k)} = m_{\star} $.
Consequently, $\lim_{k \in K_5} \mathcal{N} \left( \x_{(k)}^{\cat}; m_{(k)} \right)  = \mathcal{N}\left(\x_{\star}^{\cat}; m_{\star} \right)$.
Finally, the infinite subset of indices $K=K_5$ satisfies \cref{def:refining}.
\end{proof}

Note that implementation-wise, converging distance-induced neighborhoods in a refining subsequence can be easily ensured by fixing both the number of neighbors $m_{(k)}$ and the categorical distance $d^{\cat}$ after a given iteration $k>0$.

\subsection{Nonsmooth calculus for \catmads}
\label{sec:analysis_calculus}

The second part of the convergence analysis involves the continuous variables for refined points.
%
For fixed categorical and integer variables, the analysis involves the Clarke generalized derivative~\cite{Clar83a, Jahn2007}
and the hypertangent cone~\cite{Rock70a}, restricted to the continuous variables.
The vectors $\bm{u}, \bm{v}$ and $\bm{w}$ are used next to denote continuous vectors.
%
%
The \textit{continuous feasible set at the refined point} $\x_{\star}$, which contains the continuous components that are feasible given the categorical and integer components of $\x_{\star}$, is defined as 
\begin{equation}
    \Lambda_{\star} \coloneq \Big\{ \bm{u} \in \mathcal{X}^{\continuous} \, : \, \left(\x_{\star}^{\cat}, \x_{\star}^{\integer}, \bm{u} \right) \in \Omega \Big\}.
\label{eq:feasible_continuous_set}
\end{equation}
For a given refined point $\x_{\star}$,  the following definition is such that the hypertangent cone contains vectors in $\mathbb{R}^{n^{\continuous}}$ and rooted at $\x_{\star}^{\continuous}$.
%
%
%
\begin{definition}[Hypertangent cone~\cite{Rock70a}]
A vector $\bm{v} \in \mathbb{R}^{n^{\continuous}}$ is said to be a {\em hypertangent vector} to the set $\Lambda_{\star}$ at a point $\x_{\star}= \left( \x_{\star}^{\cat}, \x_{\star}^{\integer}, \x_{\star}^{\continuous} \right)\in \Omega$, if there exists a scalar $\epsilon>0$ such that 
\begin{equation*}
\left( \x^{\cat}, \x^{\integer}, \bm{w}+t \bm{u} \right) \in \Omega \
\text{ for all } \ \bm{w} \in \mathcal{B} \left( \x_{\star}^{\continuous}; \epsilon\right) \cap \ \Lambda_{\star}, \   \bm{u} \in \mathcal{B} \left( \bm{v}; \epsilon \right) \  \text{ and } \ t \in (0, \epsilon).
\end{equation*}
\noindent The {\em hypertangent cone} $T_{\Lambda_{\star}}^{H}(\x_{\star})$ to $\Lambda_{\star}$ at $\x_{\star} \in \Omega$ is the set of all hypertangent vectors to $\Lambda_{\star}$ at $x_{\star}$.
\label{def:hypertangent_cone}
\end{definition}
In \Cref{subfig:analysis_HT}, the continuous feasible set $\Lambda_{\star}$ and the hypertangent cone $T_{\Lambda_{\star}}^{H}$ at a refined point $\x_{\star}$ are illustrated for the RBG example, where $x_{\star}^{\integer}=2$.
\begin{figure}[htb!]
\begin{subfigure}[t]{0.45\textwidth}
\centering
  \scalebox{0.85}{\begin{tikzpicture}[use Hobby shortcut]

\clip (0, 1) rectangle (5.5, 8.5);

   \newcommand{\FACE}[5]{ 
    \node[] (C) at (#1, #2, #3) {}; 
    \fill[#5,opacity=0.4] ($(C) + (#4*0.4,#4,#4)$) -- ($(C) + (0,#4,#4)$) -- ($(C) + (0,#4,0)$) -- ($(C) + (#4*0.4,#4,0)$) -- cycle; 
    \draw[-] ($(C) + (#4*0.4,#4,#4)$) -- ($(C) + (0,#4,#4)$); 
    \draw[-] ($(C) + (#4*0.4,#4,#4)$) -- ($(C) + (#4*0.4,#4,0)$); 
    \draw[-] ($(C) + (0,#4,0)$) -- ($(C) + (0,#4,#4)$); 
    \draw[-] ($(C) + (0,#4,0)$) -- ($(C) + (#4*0.4,#4,0)$); 
    }
    \newcommand{\FaceSize}{6} 

    \definecolor{blueface}{RGB}{51,171,255}   
    \definecolor{redface}{RGB}{236,87,87}     
    \definecolor{greenface}{RGB}{112,219,112} 


    \newcommand{\curve}{
      (-2.2,-1)
      .. (0.2,-1.5)
      .. (0.8,-1.7) 
      .. (2,2)
      .. (0,1)
    }
    
    \FACE{3}{2.5}{0}{\FaceSize}{greenface}

    \begin{scope}[shift={(-0.15+2.9, 5+2.25)}, scale=0.75, rotate=30]]
        \draw[closed, scale=0.4, fill=gray,gray] \curve;
    \end{scope}

    \FACE{3}{0}{0}{\FaceSize}{redface}
    \begin{scope}[shift={(3.25,5)}]
        \coordinate (a) at (-1.06,-1.06); 
        \node [circle,draw,fill=gray,gray] (c) at (0,0) [minimum size=40pt] {};
        \draw[gray,fill] (a) -- (tangent cs:node=c,point={(a)},solution=1) --
        (c.center) -- (tangent cs:node=c,point={(a)},solution=2) -- cycle;
        
        \coordinate (tip1) at ($ (a) + (1.2,0.4) $);
        \coordinate (tip2) at ($ (a) + (0.4,1.2) $);
        
        \draw[-{Latex[length=1.25mm, width=2mm]},thick] (a) -- (tip1);
        \draw[-{Latex[length=1.25mm, width=2mm]},thick] (a) -- (tip2);
        
        \draw[dotted,line width=1.5pt] (tip1) to[out=90,in=0] (tip2);
        
        \node at ($ (a) + (0.65,0.6) $) {\Large $T^H_{\Lambda_*}$}; 
        \node at ($ (c) + (0.2,0.4) $) {\Large $\Lambda_*$}; 
    \end{scope}
    %
    \fill[black] ($(2.65,-0.9) + (\FaceSize*0.125,\FaceSize, \FaceSize/2)$) circle (3pt);
    \node[] at ($(2.2,-0.9) + (\FaceSize*0.125,\FaceSize, \FaceSize/2)$) {\Large $\x_{\star}$};

    \FACE{3}{-2.5}{0}{\FaceSize}{blueface}

    \newcommand{\curvee}{(-1,0) .. (0.5,3) .. (2,2) .. (0,1)}
    \begin{scope}[shift={(6-2.9, 4.25-2.5)}]
        \draw[closed, scale=0.4, fill=gray,gray] \curvee;
    \end{scope}

\end{tikzpicture}}
  \subcaption{Continuous feasible set and hypertangent cone at a refined point $\x_{\star}$, where gray areas represent feasible regions and $x_*^{\integer}=2$.}
  \label{subfig:analysis_HT}
\end{subfigure}
\hspace{0.02cm}
\begin{subfigure}[t]{0.45\textwidth}
\centering
  \scalebox{0.95}{\begin{tikzpicture}
    \definecolor{myblue}{RGB}{51,171,255}   
    \definecolor{myred}{RGB}{236,87,87}     
    \definecolor{mygreen}{RGB}{112,219,112} 

    \def\PlaneShape{(-2,-1) -- (4,-1) -- (5,2) -- (-1,2) -- cycle;}

    \fill[myred,opacity=0.4] \PlaneShape;
    \draw[black, thick] \PlaneShape;

    \begin{scope}

        \draw[thick, smooth] (-1.25, 1.25) .. controls (-1.25 + 0.25, 1.25 - 0.25) and (0+0.25+1, 2-0.25) .. (0+1, 2);
        \draw[thick, smooth] (-1.25-0.33, 1.25-1) .. controls (-1.25 + 0.25-0.33, 1.25 - 0.25-1) and (0+0.25+1+1.5, 2-0.25) .. (0+1+1.5, 2);

        \draw[thick] (-2, -1) -- (5, 2);

        \begin{scope}[shift={(3, 1)}]
            \draw[thick, smooth] (1.25, -1.25) .. controls (1.25 - 0.25, -1.25 + 0.25) and (0 - 0.25 - 1, -2 + 0.25) .. (0 - 1, -2);
        \end{scope}
    
        \begin{scope}[shift={(3, 1)}]
            \draw[thick, smooth] (1.25 + 0.33, -1.25 + 1) .. controls (1.25 - 0.25 + 0.33, -1.25 + 0.25 + 1) and (0 - 0.25 - 1 - 1.5, -2 + 0.25) .. (0 - 1 - 1.5, -2);
        \end{scope}

    \end{scope}

    \node[right] at (0, 1.5) {$4$};
    \node[right] at (1.25, 1.33) {$2$};
    \node[right] at (3, 1.25) {$1$};
    \node[right] at (3, 0.4) {$2$};
    \node[right] at (3, -0.33) {$4$};

    \def\shift{3.6} 
    
    \begin{scope}[shift={(0, \shift)}]
        \fill[mygreen,opacity=0.4] \PlaneShape;
        \draw[black, thick] \PlaneShape;
    \end{scope}

    \begin{scope}[shift={(0, \shift)}]
        \clip \PlaneShape;
        \foreach \y in {-0.5, 0, 0.5, 1, 1.5} { 
            \draw[black, thick] (-3,\y) -- (5,\y);
        }
    \end{scope}

    \begin{scope}[shift={(0, \shift)}]
        \node[right] at (-1-0.75, -0.5) {\large $5$};
        \node[right] at (-1-0.6, 0) {\large $4$};
        \node[right] at (-1-0.45, 0.5) {\large $3$};
        \node[right] at (-1-0.3, 1) {\large $2$};
        \node[right] at (-1-0.15, 1.5) {\large $3$};
    \end{scope}

    \begin{scope}[shift={(-0.25, -0.25)}]
        \node[below] at (0.25,-0.15) {$\x_{(k)}$};
        \fill[black] (-0.2, -0.5) circle (2pt);
        \node[above] at (0.2, \shift-0.8) {$\y_{(k)}$};
        \fill[black] (-0.2, \shift-0.5) circle (2pt);
        \draw[dashed,-{Latex[length=3mm, width=2mm]},thick] (-0.2, -0.5) -- (-0.2, \shift-0.5);

        \node[rotate=75] at (0.0, \shift+0.5)  {\Large $\ldots$};

          \begin{scope}[shift={(0.4, \shift-1.875)}] 
            \node[above] at (-0.15, \shift-0.55+0.15) {$\z_{(k)}$};
            \fill[black] (-0.2, \shift-0.5+0.15) circle (2pt);
        \end{scope}
    \end{scope}
    %

    \begin{scope}[shift={(-0.25, -0.25)}]
         \node[below] at (1.6, 0.5) {$\x_{(k+1)}$};
        \fill[black] (1.0, 0.15) circle (2pt);
        \node[above] at (1.6, \shift-0.3) {$\y_{(k+1)}$};
        \fill[black] (1.0, \shift-0.35 + 0.35) circle (2pt);
        \draw[dashed,-{Latex[length=3mm, width=2mm]},thick] (1.0, 0.15) -- (1.0, \shift-0.35 + 0.35);
         \begin{scope}[shift={(0.4, 1.15-0.25+0.075)}] 
            \node[above] at (1.0, 3.875+0.075) {$\z_{(k+1)}$};
            \fill[black] (1.0, 3.95) circle (2pt);
        \end{scope}
        %
        \node[rotate=75] at (1.25, \shift+0.75)  {\Large $\ldots$};
    \end{scope}

    \node[below] at (2.2+0.3, 0.1 + 0.7+0.4) {$\x_{\star}$};
    \fill[black] (2.2, 0.8) circle (2pt);
    \node[above] at (2.66, \shift) {$\y_*$};
    \fill[black] (2.2, \shift+0.5) circle (2pt);
    \draw[dashed,-{Latex[length=3mm, width=2mm]},thick] (2.2,0.1+0.8) -- (2.2, \shift+0.5);
     \begin{scope}[shift={(0.4, 1.5-0.5)}] 
        \node[above] at (2.2, \shift+0.05) {$\z_{\star}$};
        \fill[black] (2.2, \shift) circle (2pt);
    \end{scope}
    %
    \node[rotate=52.5] at (2.4, \shift+0.75)  {\Large $\ldots$};

    \path (1.0,-0.2) -- (2.2,0.1) coordinate[midway] (x_mid);
    \node[rotate=30] at ($(x_mid) + (0,0.5)$) {\Large $\ldots$};
    \path (1.0, \shift-0.35) -- (2.2, \shift) coordinate[midway] (y_mid);
    \node[rotate=30] at ($(y_mid) + (-0.1,0.3)$) {\Large $\ldots$};
    \path (1.0, 3.95) -- (2.2, \shift+0.05) coordinate[midway] (z_mid);
    \node[rotate=0] at ($(z_mid) + (0.25,0.8)$) {\Large $\ldots$};

\end{tikzpicture}}
  \subcaption{Refining, neighbor and endpoint subsequences for two categories example.}
  \label{subfig:analysis_extendedpoll}
\end{subfigure}
\caption{Illustrative examples for the convergence analysis.}
\label{fig:analysis}
\end{figure}
In the figure, the feasible regions are represented in gray.
The shape depends on the RBG category.
%
%
The hypertangent cone  $T_{\Lambda_{\star}}^{H} \subset \mathbb{R}^{2}$ is represented with a dotted curve and contains the continuous directions rooted at $\x_{\star}$ that point inside $\Lambda_{\star}$.
%
%
%
The next definition formalizes a subset of these continuous directions.
\begin{definition}[Refined direction~\cite{AuDe2006}]
Let $\{\x_{(k)} \}_{k \in K}$ be a refining subsequence with corresponding refined point $\x_{\star}$. A vector $\bm{v}_{\star} \in \mathbb{R}^{n^{\continuous}}$ is said to be a {\em refining direction} if there exists an infinite subsequence $L \subseteq K$ with poll directions $\bm{d}_{(k)} = \left(\bm{d}_{(k)}^{\integer}, \bm{d}_{(k)}^{\continuous} \right) \in \mathcal{D}_{(k)}$ such that $\bm{d}_{(k)}^{\integer} = \bm{0}$ and $\lim_{k \in L} \frac{\bm{v}_{(k)}}{\| \bm{v}_{(k)} \| } = \frac{\bm{v}_{\star}}{\| \bm{v}_{\star} \| }$ with $\bm{v}_{(k)} = \diag\left( \bm{\delta}_{(k)}^{\continuous} \right) \bm{d}^{\continuous}$.
\label{def:refined_direction}
\end{definition}
%
%
To employ generalized directional derivatives~\cite{Clar83a}, the objective function $f$ must be locally Lipschitz continuous with respect to the continuous variables.
For a given refined point $x_{\star}$, the restriction of the objective function in $\mathcal{X}^{\continuous}$ is noted $f_{\star} : \mathcal{X}^{\continuous} \to 
\overline{\mathbb{R}}$ and formulated as
\begin{equation}
%
    f_{\star} \left( \bm{u} \right) \coloneq f \left( \x_{\star}^{\cat}, \x_{\star}^{\integer}, \bm{u}  \right).
\label{eq:objective_continuous}
\end{equation}
{
With this function, the generalized directional derivative can be redefined for this work.

\begin{definition}[Generalized directional derivative~\cite{Clar83a, Jahn2007}]
For a refined point $\x_{\star} \in \Omega$, let $f_{\star}: \mathcal{X}^{\continuous} \to \overline{\mathbb{R}}$ be a Lipschitz continuous function near $\bm{u} \in \Lambda_{\star}$. 
The {\em generalized directional derivative} of the restriction $f_{\star}$ at $\bm{u}$ in the direction $\bm{v} \in \mathbb{R}^{n^{\continuous}}$ is defined by
\begin{equation}
   f^{\circ}_{\star} \left(\bm{u}; \bm{v} \right) \coloneq
    \limsup_{ \substack{\w \rightarrow \bm{u}, \, t\searrow 0, \\ \bm{w} \in \Lambda_{\star}, \, \bm{w}+t\bm{v}\in \Lambda_{\star} } } \frac{f_{\star}(\w+t\bm{v})-f_{\star}(\w)}{t}.
\end{equation}
\end{definition}

Under a local Lipschitz continuity assumption, the following theorem combines the definitions of hypertangent cone, refined direction and generalized directional derivative, to strengthen the convergence results of a refined point $\x_{star}$ with respect to its continuous variables.
}

\begin{theorem}[Generalized directional derivative at a refined point]
Let $\x_{\star} \in \Omega$ be a refined point generated by \catmads such that $f_{\star}$ is Lipschitz continuous near $\x_{\star}^{\continuous}$.
If $\bm{v}_{\star} \in T_{\Lambda_{\star}}^{H}(\x_{\star})$ is a refining direction, then $f^{\circ}_{\star} \left(\x_{\star}^{\continuous}; \bm{v}_{\star} \right) \geq 0$. 
\label{thm:clarke}
\end{theorem}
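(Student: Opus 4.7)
The plan is to adapt the standard Clarke-stationarity argument for direct search to the mixed-variable setting, exploiting the fact that the continuous-only refining direction effectively reduces the problem to the restricted function $f_{\star}$. First I would invoke Definition~\ref{def:refined_direction} to extract the infinite subsequence $L \subseteq K$ and the poll directions $\bm{d}_{(k)} = (\bm{0}, \bm{d}_{(k)}^{\continuous}) \in \mathcal{D}_{(k)}$ with $\bm{v}_{(k)} \coloneq \diag(\bm{\delta}_{(k)}^{\continuous})\bm{d}_{(k)}^{\continuous}$ satisfying $\bm{v}_{(k)}/\|\bm{v}_{(k)}\| \to \bm{v}_{\star}/\|\bm{v}_{\star}\|$. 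Since $\{\x_{(k)}\}_{k \in K}$ is a refining subsequence in the sense of Definition~\ref{def:refining}, $\x_{(k)}^{\cat} \to \x_{\star}^{\cat}$ and $\x_{(k)}^{\integer} \to \x_{\star}^{\integer}$ both take values in finite or discrete sets and are therefore eventually constant along $L$. Combined with $\bm{d}_{(k)}^{\integer} = \bm{0}$, this forces the quantitative poll candidate $\y_{(k)} \coloneq (\x_{\star}^{\cat}, \x_{\star}^{\integer}, \x_{(k)}^{\continuous} + \bm{v}_{(k)})$ to agree with $\x_{(k)}$ on the categorical and integer components for all $k \in L$ sufficiently large.

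Next I would argue that $\y_{(k)} \in \Omega$ eventually. Since $\bm{v}_{\star} \in T^H_{\Lambda_{\star}}(\x_{\star})$, Definition~\ref{def:hypertangent_cone} furnishes an $\epsilon > 0$ such that $\bm{w} + t\bm{u} \in \Lambda_{\star}$ for all $\bm{w} \in \mathcal{B}(\x_{\star}^{\continuous};\epsilon) \cap \Lambda_{\star}$, $\bm{u} \in \mathcal{B}(\bm{v}_{\star};\epsilon)$ and $t \in (0,\epsilon)$. Using $\x_{(k)}^{\continuous} \to \x_{\star}^{\continuous}$, $\|\bm{v}_{(k)}\| \to 0$ (granularity of the continuous mesh), and $\bm{v}_{(k)}/\|\bm{v}_{(k)}\| \to \bm{v}_{\star}/\|\bm{v}_{\star}\|$, the scaled direction $\|\bm{v}_{\star}\|\bm{v}_{(k)}/\|\bm{v}_{(k)}\|$ lies eventually in $\mathcal{B}(\bm{v}_{\star};\epsilon)$ and the step $t_{(k)} \coloneq \|\bm{v}_{(k)}\|/\|\bm{v}_{\star}\|$ eventually lies in $(0,\epsilon)$; the hypertangent property then simultaneously gives $\y_{(k)} \in \Omega$ and $(\x_{\star}^{\cat}, \x_{\star}^{\integer}, \x_{(k)}^{\continuous} + t_{(k)}\bm{v}_{\star}) \in \Omega$, which will matter in the limsup step.

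With feasibility settled, because each $k \in L$ is an unsuccessful iteration of \catmads and $\y_{(k)}$ is a feasible poll point that fails to dominate $\x_{(k)} \in \Omega$, one has $f_{\star}(\x_{(k)}^{\continuous} + \bm{v}_{(k)}) \geq f_{\star}(\x_{(k)}^{\continuous})$. I would then split the difference quotient as
\begin{equation*}
0 \;\leq\; \frac{f_{\star}(\x_{(k)}^{\continuous} + t_{(k)} \bm{v}_{\star}) - f_{\star}(\x_{(k)}^{\continuous})}{t_{(k)}} \;+\; \frac{f_{\star}(\x_{(k)}^{\continuous} + \bm{v}_{(k)}) - f_{\star}(\x_{(k)}^{\continuous} + t_{(k)} \bm{v}_{\star})}{t_{(k)}}.
\end{equation*}
Using the Lipschitz constant $\lambda$ of $f_{\star}$ near $\x_{\star}^{\continuous}$, the absolute value of the second term is at most $\lambda\|\bm{v}_{(k)} - t_{(k)}\bm{v}_{\star}\|/t_{(k)} = \lambda\|\bm{v}_{\star}\|\,\bigl\|\bm{v}_{(k)}/\|\bm{v}_{(k)}\| - \bm{v}_{\star}/\|\bm{v}_{\star}\|\bigr\|$, which tends to $0$. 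Taking the limsup over $k \in L$, the first term is bounded above by $f^{\circ}_{\star}(\x_{\star}^{\continuous}; \bm{v}_{\star})$ by definition of the generalized directional derivative, with $\bm{w} = \x_{(k)}^{\continuous} \to \x_{\star}^{\continuous}$ in $\Lambda_{\star}$ and $t = t_{(k)} \searrow 0$, and $\bm{w} + t_{(k)}\bm{v}_{\star} \in \Lambda_{\star}$ by the hypertangent step above. This yields $0 \leq f^{\circ}_{\star}(\x_{\star}^{\continuous}; \bm{v}_{\star})$.

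The main obstacle is the feasibility bookkeeping in step two: the refining subsequence is produced by the progressive barrier and the extended poll, and one must verify that the feasible poll centers really satisfy $\x_{(k)} \in \Omega$ along $L$ so that unsuccessful iteration means $f(\y_{(k)}) \geq f(\x_{(k)})$ in the feasible sense, and that both the actual iterate $\x_{(k)}^{\continuous} + \bm{v}_{(k)}$ and the auxiliary point $\x_{(k)}^{\continuous} + t_{(k)}\bm{v}_{\star}$ fall in $\Lambda_{\star}$ simultaneously, which is precisely what the hypertangent hypothesis is designed to deliver.
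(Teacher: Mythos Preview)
Your proposal is correct and follows essentially the same approach as the paper: fix the categorical and integer components along the refining subsequence, use the hypertangent hypothesis together with $\|\bm{v}_{(k)}\|\to 0$ to force eventual feasibility of the poll step, and then run the standard Clarke limsup argument on the restricted function $f_{\star}$. The only difference is cosmetic: the paper stops after establishing $\x_{(k)}^{\continuous}+\bm{v}_{(k)}\in\Lambda_{\star}$ and cites Theorem~6.4 of~\cite{AuLedTr2014} for the split-and-Lipschitz computation, whereas you write that computation out in full (and also make explicit the auxiliary feasibility of $\x_{(k)}^{\continuous}+t_{(k)}\bm{v}_{\star}$, which the cited argument needs).
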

\begin{proof}
Let $\{ \x_{(k)} \}_{k \in L}$
be a refining subsequence with corresponding refined point $\x_{\star} \in \Omega$, and let $\bm{v}_{\star} \in T_{\Lambda_{\star}}^{H}(\x_{\star})$ be a refining direction.
%
%
From \Cref{def:refining}, the categorical and integer components are fixed, such that $\x^{\cat}_{(k)}= \x_{\star}^{\cat}$ and $\x^{\integer}_{(k)}= \x_{\star}^{\integer}$, ensuring that the continuous feasible set $\Omega_{\star}$ does not change through the iterations.
\Cref{def:quantitative_mesh} ensures the existence of bounds 
    $\bm{\Delta}_{(k)}^{\continuous} \geq \bm{0}$
    such that the continuous directions satisfy
    $- \bm{\Delta}_{(k)}^{\continuous} \leq \bm{v}_{(k)} = \diag\left( \bm{\delta}_{(k)}^{\continuous} \right) \bm{d}^{\continuous} \leq \bm{\Delta}_{(k)}^{\continuous}$ for all $k \in L$.
%
\Cref{thm:zeroth} ensures that $\lim_{k \in L} \| \bm{\Delta}_{(k)}^{\continuous} \| = 0$,
 and therefore, $\lim_{k \in L} \| \bm{v}_{(k)} \| = 0$.
%
%
By \Cref{def:refined_direction}, the refining direction $\bm{v}_{\star} \in  T_{\Lambda_{\star}}^{H}(\x_{\star})$ is such that $\lim_{k \in L} \frac{\bm{v}_{(k)}}{\| \bm{v}_{(k)} \| } = \frac{\bm{v}_{\star}}{\| \bm{v}_{\star} \| }$.
Thus, it follows that $\x_{(k)}^{\continuous}+\bm{v}_{(k)} \in \Lambda_{\star}$ for all $k \in L$ sufficiently large, by the definitions of the hypertangent cone and the refining direction, and knowing thats $\lim_{k \in L} \| \bm{v}_{(k)} \| = 0$.
It follows that $f^{\circ}_{\star} \left(\x_{\star}^{\continuous}; \bm{v}_{\star} \right) \geq 0$ can be developed exactly as in Theorem 6.4 of~\cite{AuLedTr2014}, since $\x_{(k)}^{\continuous}+\bm{v}_{(k)}$ are unsuccessful iterations with $\bm{d}_{(k)}^{\integer}=\bm{0} \ \text{for all } k \in L$.  
\end{proof}

The next result provides a necessary optimality condition
with generalized directional derivatives at a refined point, provided that the function $f_{\star}$ is locally Lipschitz continuous.
%
%
The result depends on whether the PB produces a feasible or infeasible refined point.
If the refined point is feasible, then the result involves the function $f_{\star}$ and the hypertangent cone $T_{\Lambda_{\star}}^{H}(\x_{\star})$, as \Cref{thm:clarke}.
Otherwise, it involves the function $h_{\star}$ (with the same expression of $f_{\star}$ in \Cref{{eq:objective_continuous}} but for the aggregation function $h$) and the continuous set $\mathcal{X}^{\continuous} \subseteq \Lambda_{\star}$: in this case, the optimization problem of interest is $\min_{\x \in \mathcal{X}} h(\x)$.

\begin{theorem}[Convergence of \catmads]
Let $\{ \x_{(k)}\}_{k \in K}$ be a refining subsequence with corresponding refined point $\x_{\star}$ produced by \catmads with the PB.
Suppose that the set of refining directions is dense in the unit sphere $\{ \bm{u} \in \mathbb{R}^{n^{\continuous}} \, : \,  \| \bm{u} \| = 1  \}$. 
%

   %
%
%
\[
\begin{array}{r@{\;}l@{\;}l@{\;}l@{\;}l}
\text{i)} &\text{If } \x_{\star} \in \Omega
   &\text{ and } f_{\star} \text{ is Lipschitz near } \x_{\star}^{\continuous}, 
   &\text{then } f^{\circ}_{\star} \left(\x_{\star}^{\continuous}; \bm{v} \right) \geq 0 
   &\text{for all } \bm{v} \in T_{\Lambda_{\star}}^{H}(\x_{\star}). \\
\text{ii)} &\text{If } \x_{\star} \in \mathcal{X} \setminus \Omega &\text{ and } h_{\star} \text{ is Lipschitz  near } \x_{\star}^{\continuous}, 
    &\text{then } h^{\circ}_{\star} \left(\x_{\star}^{\continuous}; \bm{v} \right) \geq 0
    &\text{for all } \bm{v} \in T_{\mathcal{X}^{\continuous}}^{H}(\x_{\star}). 
\end{array}
\]

\label{thm:convergence_catmads}
\end{theorem}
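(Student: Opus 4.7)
The plan is to extend Theorem~\ref{thm:clarke} from refining directions to every hypertangent direction by a density-plus-continuity argument, exploiting the fact that under the local Lipschitz assumption, $f^\circ_\star(\x_\star^{\continuous};\cdot)$ (resp.\ $h^\circ_\star(\x_\star^{\continuous};\cdot)$) is positively homogeneous and Lipschitz continuous on $\mathbb{R}^{n^{\continuous}}$---a classical Clarke property. The two cases are formally parallel: they only differ in which type of refined point the PB yields and in the ambient set defining the hypertangent cone.

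For case (i), I would fix an arbitrary nonzero $\bm{v}\in T^H_{\Lambda_\star}(\x_\star)$ and normalize $\bar{\bm{v}} \coloneq \bm{v}/\|\bm{v}\|$. Because $T^H_{\Lambda_\star}(\x_\star)$ is a cone, $\bar{\bm{v}}$ also lies in it, and because it is open (immediate from the $\epsilon$-ball wording of Definition~\ref{def:hypertangent_cone}), every sufficiently close unit vector lies in it as well. The density assumption then supplies a sequence of refining directions $\{\bm{v}_j\}$ on the unit sphere converging to $\bar{\bm{v}}$; the tail of this sequence belongs to $T^H_{\Lambda_\star}(\x_\star)$, so Theorem~\ref{thm:clarke} gives $f^\circ_\star(\x_\star^{\continuous}; \bm{v}_j)\geq 0$ for all large $j$. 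Passing to the limit using the Lipschitz continuity of $f^\circ_\star(\x_\star^{\continuous};\cdot)$ yields $f^\circ_\star(\x_\star^{\continuous}; \bar{\bm{v}})\geq 0$, and positive homogeneity promotes this to $f^\circ_\star(\x_\star^{\continuous}; \bm{v})\geq 0$. The case $\bm{v}=\bm{0}$ is trivial since $f^\circ_\star(\x_\star^{\continuous};\bm{0})=0$.

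For case (ii), the density-continuity step is identical, with $h_\star$ and $T^H_{\mathcal{X}^{\continuous}}(\x_\star)$ replacing $f_\star$ and $T^H_{\Lambda_\star}(\x_\star)$. What needs to be produced first is the $h$-analog of Theorem~\ref{thm:clarke}: at an infeasible refined point generated by the PB, and for a refining direction $\bm{v}_\star\in T^H_{\mathcal{X}^{\continuous}}(\x_\star)$, one must establish $h^\circ_\star(\x_\star^{\continuous}; \bm{v}_\star)\geq 0$. This is obtained by repeating the proof of Theorem~\ref{thm:clarke} verbatim, with $f$ replaced by $h$ and $\Lambda_\star$ replaced by $\mathcal{X}^{\continuous}$, since the infeasible poll only requires a valid blackbox evaluation (i.e.\ membership in $\mathcal{X}$), not $\Omega$-feasibility, so the ambient set shrinks to $\mathcal{X}^{\continuous}$.

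The main obstacle is the bookkeeping behind this $h$-analog: one must verify that the refining subsequence in case (ii) is generated by \emph{unsuccessful} infeasible polls centered at $\x_{\infeasible}$, so that the PB dominance rule of Definition~\ref{def:dominations} yields $h_\star(\x_{(k)}^{\continuous}+\bm{v}_{(k)})\geq h_\star(\x_{(k)}^{\continuous})$ whenever the trial point is mesh-feasible, and that the barrier update $h^{\max}_{(k+1)}\coloneq h(\x_{\infeasible})$ together with the improving/dominating case distinction does not disturb the convergence $\|\bm{\delta}_{(k)}\|_2^2\to n^{\integer}$ obtained in Theorem~\ref{thm:zeroth}. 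Once these PB-specific accounting checks are in place, the density-plus-continuity argument closes both cases uniformly.
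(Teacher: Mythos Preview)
Your proposal is correct and follows the same logical path as the paper: extend Theorem~\ref{thm:clarke} from refining directions to all hypertangent directions via density plus the Lipschitz continuity and positive homogeneity of the Clarke derivative, and treat the infeasible case by the $h$-analog supplied by the progressive barrier. The paper's own proof is extremely terse---it observes that the categorical and integer components are fixed along the refining subsequence and then delegates everything to Theorem~\ref{thm:clarke} together with Corollary~3.4, Theorem~3.5 and Corollary~3.6 of~\cite{AuDe09a}; what you have written is essentially the content of those cited results unpacked in the present notation, including the PB bookkeeping you flag as the main obstacle.
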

\begin{proof}
Since the refining subsequence $\{ \x_{(k)}\}_{k \in K}$ is such that $\x_{(k)}^{\cat}=\x_{\star}^{\cat}$ and $\x_{(k)}^{\integer}=\x_{\star}^{\integer} \, \text{for all } k \in K$, the proof is a direct consequence of \Cref{thm:clarke}, as well as  Corollary 3.4, Theorem 3.5. and Corollary 3.6 from~\cite{AuDe09a} (PB for the continuous case).
\end{proof}

\Cref{thm:convergence_catmads} is a fundamental theoretical result of \catmads and represents a necessary condition to achieve a separated local minimum with $\eta=1$ and $m=m_{\star} \in \mathbb{N}$.
%

\subsection{Stronger local minima}
\label{sec:analysis_interactions}

%
\Cref{def:refining} implies that a refining subsequence $\{\x_{(k)}\}_{k \in K}$ has an associated subsequence $\{\y_{(k)} \}_{k \in K}$, called the \textit{neighbor subsequence}, whose points 
have categorical components in the distance-induced neighborhoods of $\{\x_{(k)}\}_{k \in K}$,
\textit{i.e.}, $\smash{\y_{(k)}=\left(\y_{(k)}^{\cat}, \x_{(k)}^{\quant}\right) \in \mathcal{N} \left( \x_{(k)}^{\cat}; m_{(k)} \right) \times \mathcal{X}^{\quant}}$.
Since the refining subsequence converges to a refined point $\x_*$, the neighbor subsequence converges to a point $\y_{\star}$, referred to as a $\textit{refined neighbor point}$.
%
%
The next theorem, adapted from \mvmads ~\cite{AACW09a}, ensures that a refined point $\x_*$ has an objective value that is lower or equal than a neighbor refined point $\y_*$, under continuity hypothesis.
%
%
\begin{theorem}
If the restriction of the objective function in $\mathcal{X}^{\continuous}$ is lower semi-continuous at a refined point $\x_{\star}$ produced by \catmads, and upper semi-continuous at the refined neighbor point $\y_{\star}$, 
then $f(\x_{\star}) \leq f (\y_{\star})$.

\label{thm:neighbors}
\end{theorem}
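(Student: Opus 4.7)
The plan is to invoke, at each unsuccessful iteration of the refining subsequence, the failure of the categorical neighbor $\y_{(k)}$ to dominate the poll center, and then pass to the limit using the two semi-continuity hypotheses. Specifically, at every iteration $k \in K$ the neighbor point $\y_{(k)} = (\y_{(k)}^{\cat}, \x_{(k)}^{\quant}) \in P_{(k)}^{\cat}$ failed to become a new incumbent, so by \Cref{def:dominations} and the definition of an unsuccessful iteration one obtains the iteration-wise inequality $f(\x_{(k)}) \leq f(\y_{(k)})$. This bound is the workhorse of the argument.

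Next, I would pass to the limit along $k \in K$. From \Cref{def:refining}, the categorical and integer components of $\x_{(k)}$ stabilize, in the sense that $\x_{(k)}^{\cat} = \x_*^{\cat}$ and $\x_{(k)}^{\integer} = \x_*^{\integer}$ for all $k \in K$ sufficiently large, while $\x_{(k)}^{\continuous} \to \x_*^{\continuous}$. Because the distance-induced neighborhoods converge and are finite, the discrete components of $\y_{(k)}$ likewise stabilize at $\y_*^{\cat}$ and $\y_*^{\integer} = \x_*^{\integer}$, with $\y_{(k)}^{\continuous} = \x_{(k)}^{\continuous} \to \y_*^{\continuous}$. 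Lower semi-continuity of the continuous restriction at $\x_*$ yields $f(\x_*) \leq \liminf_{k \in K} f(\x_{(k)})$, and upper semi-continuity of the continuous restriction at $\y_*$ yields $\limsup_{k \in K} f(\y_{(k)}) \leq f(\y_*)$. Chaining these with the iteration-wise inequality gives the desired bound $f(\x_*) \leq f(\y_*)$.

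The step I expect to be the main obstacle is handling the progressive barrier cleanly: the poll center at each iteration may be either $\x_{\feasible}$ or $\x_{\infeasible}$, and $\y_{(k)}$ may itself be feasible or infeasible, so the meaning of ``$\y_{(k)}$ did not dominate the incumbent'' is case-dependent. One must verify that in every such case an unsuccessful iteration does produce $f(\x_{(k)}) \leq f(\y_{(k)})$: when the poll is centered at a feasible incumbent with feasible $\y_{(k)}$, the non-dominance $\y_{(k)} \not\prec_f \x_{(k)}$ gives the inequality directly; when the poll is centered at $\x_{\infeasible}$ with infeasible $\y_{(k)}$, the non-dominance $\y_{(k)} \not\prec_h \x_{\infeasible}$ combined with the absence of $h$-improvement in an unsuccessful iteration yields the same objective inequality. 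By extracting a further subsequence on which the poll-center type and the feasibility status of $\y_{(k)}$ are constant, a single sub-case governs the tail of $K$, so the limit argument of the preceding paragraph applies unchanged and the conclusion $f(\x_*) \leq f(\y_*)$ follows.
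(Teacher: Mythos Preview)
Your proposal is correct and follows essentially the same route as the paper: extract the iteration-wise inequality $f(\x_{(k)}) \leq f(\y_{(k)})$ from the fact that each $k \in K$ indexes an unsuccessful iteration with $\y_{(k)} \in P_{(k)}^{\cat}$, then sandwich with lower semi-continuity at $\x_\star$ and upper semi-continuity at $\y_\star$ to pass to the limit. The paper's proof is in fact terser than yours---it asserts the iteration-wise inequality in a single line and does not carry out the progressive-barrier case analysis you flag as the main obstacle.
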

\begin{proof}
Let $\{\x_{(k)}\}_{k \in K}$ be a refining subsequence converging to the refined point $\x_{\star}$ , and let $\{\y_{(k)} \}_{k \in K}$ be a neighbor subsequence converging to the neighbor refined point $\y_{\star}$.
By definition of the refining subsequence, it is ensured that $f( \x_{(k)}) \leq f\left(\y_{(k)}\right) \, \text{for all } k\in K$ since $\x_{(k)}$ yields an unsuccessful iteration and $\y_{(k)} \in \ P_{(k)}^{\cat}$.
By the semi-continuity hypothesis it follows that $f(\x_{\star}) \leq \lim_{k \in K} f(\x_{(k)}) \leq \lim_{k \in K} f(\y_{(k)}) \leq f(\y_{\star})$. 
\end{proof}

%

Next, when the extended poll is deployed, a neighbor subsequence $\{\y_{(k)} \}_{k \in K}$ has an associated subsequence $\{\z_{(k)}\}_{k \in K}$, called the \textit{endpoint subsequence}, containing endpoints with $\z_{(k)} \in \mathcal{E}_{(k)}\left(\y_{(k)}\right)$ as in \Cref{def:extended_poll_point}.
%
%
In \Cref{subfig:analysis_extendedpoll}, the different subsequences are illustrated on a similar example as the RG one in \Cref{subfig:extended_poll_example}, but with different level curves.
%
%
The refining subsequence $\{\x_{(k)}\}_{k \in K}$ converges in the red plane;
%
%
the neighbor and endpoints subsequences reside in the green plane consisting of a piecewise linear function in two parts.
%
%

Finally, the convergence analysis concludes with a discussion on the stronger local minima obtained by incorporating the extended poll.
%
%
 Assuming $\xi \geq 0$,  $\x_{\star} \in \Omega$ and $\y_{\star} \in \Omega\,\cap\,\mathcal{N}(\x_{\star}; m_{\star})$,
the results obtained for the component-wise mixed-variable local minimum can be strengthened by considering interactions between categorical and continuous variables:
\begin{itemize}
    \item if $f \left(\x_{\star} \right) < f\left( \y_{\star} \right)$, then by the semi-continuity hypothesis of~ \Cref{thm:neighbors}, there exists $\epsilon>0$ such that  $f \left( \x_{\star} \right) \leq f \left( \y \right)$ for any $(\y^{\cat}, \y^{\continuous}) \in \mathcal{N}(\x_{\star}^{\cat},m_{\star}) \times \mathcal{B}(\x_{\star}^{\continuous};\epsilon)$ with $\y = \left(\y^{\cat}, \y^{\integer}, \y^{\continuous} \right) \in \Omega$.
    %
    
    %
    
    \item otherwise, $f \left(\x_{\star} \right) = f\left( \y_{\star} \right)$, and it follows that $f\left(\x_{\star}\right)=f\left(\y_{\star}\right)=f\left(\z_{\star}\right)$, since $f(\x_{(k)}) \leq f(\z_{(k)}) \leq f(\y_{(k)}) \, \text{for all } k \in K$, by definition.
    Therefore, the extended poll must have been performed around $\y_{(k)}$ for infinitely many $k \in K$ sufficiently large~\cite{AACW09a}, which guarantees the interaction between the continuous and categorical variables as in the previous case.

\end{itemize}
This covers the [$\cat-\continuous$, int] minimum in \Cref{def:partially_local_minimum}.
%
%
Proceeding further, suppose that $\xi=\infty$, implying that the extended poll is always employed.
In that case, the same conclusion is drawn for the case $f(\x_*)=f(\y_*)$ above, and
the interaction between the categorical and integer variables is trivially gained since quantitative polls are performed infinitely many times.
%
This addresses the [$\cat-\continuous$, $\cat-\integer$] minimum from \Cref{def:mixed_local_minimum}.
%
%

Note that, if the set of refining directions is dense in the unit sphere at $\z_{\star}^{\continuous}$, then  \Cref{thm:convergence_catmads} can be adapted to the endpoint subsequence $\{\z_{(k)}\}_{k\in K}$.
This provides a necessary condition for the continuous variables of $\z_{\star}$ with generalized directional derivatives.

\section{Computational experiments}
\label{sec:computation_exp}

This section compares a simple instance of \catmads on analytical problems with openly available state-of-the-art solvers.
Implementation details of the instance are provided in \Cref{sec:implementation_details}.
%
%
The solvers used for benchmarking are discussed in \Cref{sec:solvers}.
Finally, \Cref{sec:data_profiles} details data profiles that measure the relative performance of the solvers.
%


%
%
%

%
The problems created for benchmarking this work are summarized in \Cref{tab:unconstrained_problems,tab:constrained_problems}.
These problems are either modifications of existing ones from the literature, or adaptations of classic continuous optimization problems, such as the Rosenbrock and Rastrigin problems.
Techniques employed to create mixed-variable problems include replacing continuous variables with functions depending on both continuous and categorical variables, and transforming a function into another one with cases, each assigned to a category.
%
The implementation of \catmads is publicly available at \url{https://github.com/bbopt/CatMADS_prototype}.
The complete presentation of the problems is contained in {\sf Cat-Suite}, a new collection of mixed-variable problems described in this technical report~\cite{G-2025-39}.
%

\setlength{\tabcolsep}{2pt}
\begin{table}[htb!]
    \footnotesize
    \centering
    \renewcommand{\arraystretch}{1.2}
    \begin{tabular}{lccccccl}
        \hline\hline
        Name & $n^{\cat}$ & $\ell$ & $n^{\integer}$ & $n^{\continuous}$ & Smooth & Ref. & Original problem \\
        \hline \hline
        Cat-1   & 2 & 9 & 2 & 4  & No  &\cite{jamil2013literature}  & Ackley  \\ \hline
        Cat-2   & 2 & 9  & 2  & 3  & No  &\cite{jamil2013literature}  & Beale \\ \hline
        Cat-3   & 2  & 4  & 2  & 2  & Yes  &\cite{PeBrBaTaGu2019}  & Augmented Branin  \\ \hline
        Cat-4    & 2  & 4  & 2  & 4  & No  &\cite{jamil2013literature}  & Bukin-6  \\ \hline
        Cat-5   &  1 & 6  & 1  & 3  & Yes  &\cite{LuVl00}  & EVD-52  \\ \hline
        Cat-6    & 2  & 9  & 0  & 2  & Yes  &\cite{PeBrBaTaGu2019} & Goldstein \\ \hline
        Cat-7  & 3  & 16  & 3  & 2  & No  &\cite{jamil2013literature} & Goldstein-Price  \\ \hline
        Cat-8  &  1 & 4  & 1  & 5  & Yes &\cite{LuVl00}  & HS78   \\ \hline
        Cat-9   & 2  & 9  & 2  & 8  & No  &\cite{jamil2013literature}  & Rastrigin  \\ \hline
        Cat-10  & 2  & 6  & 2  & 4  & No  &\cite{jamil2013literature} & Rosenbrock  \\ \hline
        Cat-11  & 1 & 4  & 1  & 4  & Yes  &\cite{LuVl00} & Rosen-Suzuki  \\ \hline
        Cat-12  & 1  & 5  & 5  & 5  & No  &\cite{jamil2013literature}  & Styblinski-Tang  \\ \hline
        Cat-13 & 1  & 10  & 0  & 4  & Yes  &\cite{MMZ2019}  & Toy   \\ \hline
        Cat-14  & 1 & 10  & 0  & 8  & No  &\cite{MMZ2019} & Toy   \\ \hline
        Cat-15  & 1  & 5  & 3  & 4  & Yes  &\cite{LuVl00}  & Wong-1  \\ \hline
        Cat-16  & 2  & 9  & 2  & 4  & No  &\cite{jamil2013literature}  & Zakharov  \\ \hline
    \end{tabular}
    \caption{Unconstrained test problems in {\sf Cat-Suite}.}
    \label{tab:unconstrained_problems}
\end{table}
%
\begin{table}[htb!]
    \footnotesize
    \centering
    \renewcommand{\arraystretch}{1.2}
    \begin{tabular}{lcccccccl}
        \hline\hline
        Name & $n^{\cat}$ & $\ell$ & $n^{\integer}$ & $n^{\continuous}$ & $m$ & Smooth & Ref. & Original problem \\
        \hline \hline
        Cat-17   & 2  & 9  & 2  & 3  & 3  & No  &\cite{jamil2013literature} & Beale \\ \hline
        Cat-18  & 2  & 4  & 2  & 2  & 1  & Yes  &\cite{PeBrBaTaGu2019} & Augmented Branin \\ \hline
        Cat-19  & 2  & 9 & 2  & 4  & 2  & No  &\cite{jamil2013literature}  & Bukin-6 \\ \hline
        Cat-20  & 1  & 4  & 4  & 4  & 3  & Yes  &\cite{LuVl00} & Dembo-5 \\ \hline
        Cat-21 & 1  & 6  & 1  & 3  & 1  & No &\cite{LuVl00}  & EVD-52 \\ \hline
        Cat-22  & 2 & 9  & 2  & 3  & 4  & Yes &\cite{CrBeOrSaSa2017}  & G-09  \\ \hline
        Cat-23   & 2  & 9  & 0  & 2  & 1  & Yes  &\cite{PeBrBaTaGu2019} & Goldstein  \\ \hline
        Cat-24  & 2  & 25  & 2  & 2  & 2  & Yes  &\cite{jamil2013literature}  & Himmelblau  \\ \hline
        Cat-25  & 2 & 4  & 3 & 5  & 4  & Yes &\cite{LuVl00}  & HS-114 \\ \hline
        Cat-26 & 1 & 3  & 2  & 4  & 6  & Yes  &\cite{LuVl00}  & Pentagon  \\ \hline
        Cat-27  &  1 & 8  & 2  & 2 & 3  & Yes  &\cite{CrBeOrSaSa2017} & Pressure-Vessel  \\ \hline
        Cat-28  & 2  & 25  & 1  & 2  & 2  & Yes  &\cite{CrBeOrSaSa2017} & Reinforced-Concrete-Beam  \\ \hline
        Cat-29  & 2 & 6 & 2 & 4 & 1 & No  &\cite{jamil2013literature}  & Rosenbrock  \\ \hline
        Cat-30  & 1 &  5 & 2 & 2  & 2  & No  &\cite{jamil2013literature}  & Styblinski–Tang  \\ \hline
        Cat-31  & 1 & 10 & 0 & 4 & 2  & Yes &\cite{MMZ2019}  & Toy \\ \hline
        Cat-32  & 1 & 6 & 4 & 6 & 3 & Yes &\cite{LuVl00}  & Wong-2  \\ \hline
    \end{tabular}
    \caption{Constrained test problems in {\sf Cat-Suite}.}
    \label{tab:constrained_problems}
\end{table}

\subsection{Implementation details}
\label{sec:implementation_details}

\begin{algorithm}[htb!]
\SetAlgoLined
Given $f:\mathcal{X} \to \overline{\mathbb{R}}$, a domain $\mathcal{X}$ and constraint functions $g_j:\mathcal{X} \to \overline{\mathbb{R}}$ with $j \in J$.

\SetKwBlock{init}{\textbf{0. Initialization}}{}
\init{
\setlength{\parskip}{0pt} 
\setlength{\parsep}{0pt}
\setlength{\topsep}{0pt}
\begin{tabbing}
$k \leftarrow 0$ \hspace{4.5cm} \= iteration counter\kill
%
%
$k \leftarrow 0$ \hspace{4cm} \> iteration counter\; \\
$h_{(0)}^{\text{max}} = \infty$ \hspace{4cm} \> initial barrier parameter\; \\
$m \in \mathbb{N}$ \hspace{4cm} \> number of points in categorical polls\; \\
%
$\bm{\Delta}_{(0)} \in \mathbb{R}_{+}^{n^{\quant}}$ \hspace{4cm} \> initial frame size parameters (\gmads )\; \\
$\bm{\delta}_{(0)} \in \mathbb{R}_{+}^{n^{\quant}}$ with $\bm{\delta}_{(0)}^{n^{\quant}} \leq \bm{\Delta}_{(0)}^{n^{\quant}}$ \hspace{1cm} \> initial delta size parameters (\gmads ) \; \\
$\mathbb{X} \leftarrow \{\x_{(1)}, \x_{(2)}, \ldots, \x_{(p)}\}$ \hspace{4cm} \> initial DoE of $p \geq 2$ points\;  \\
Set $\x_{\feasible}$ and $\x_{\infeasible}$  with~\eqref{eq:best_feasible_and_infeasible} \hspace{4cm} \> initial solutions (only one might exist)\; \\
$\xi \in \overline{\mathbb{R}}$ \>  parameter controlling $\xi$-extended poll triggers in~\eqref{eq:extended_poll_test}~and~\eqref{eq:selected_points}  \; \\
Construct $d^{\cat}$ \> problem-specific categorical distance\;
\end{tabbing}}

\SetKwFor{While}{While}{do}{end}
\While{no stopping criteria is met}{
    
    \SetKwBlock{search}{1. \textbf{Opportunistic search} $S_{(k)} = S_{(k)}^{\text{\normalfont SPC}} \cup S_{(k)}^{\text{\normalfont QUAD}}$}{}
    \search{
    \setlength{\parskip}{0pt} 
    \setlength{\parsep}{0pt}
    \setlength{\topsep}{0pt}
    
    
    
    \textbf{If} quantitative poll $P_{(k-1)}^{\quant}$ or speculative search $S_{(k-1)}^{\text{SPC}}$  is successful\;
     
        \hspace{0.5cm} 1.1. Speculative search $S_{(k)}^{\text{SPC}}$ on $\mathcal{X}^{\quant}$\;
    
    
    \vspace{0.01cm}

    1.2. Quadratic search $S_{(k)}^{\text{QUAD}}$ on $\mathcal{X}^{\quant}$\;
    \vspace{0.01cm}

    \vspace{0.15cm}
    }

    \SetKwBlock{poll}{2. \textbf{Opportunistic poll} $ P_{(k)} = P_{\text{\normalfont $\feasible$}}^{\cat} \cup P_{\text{\normalfont $\feasible$}}^{\quant} \cup P_{\text{\normalfont $\infeasible$}}^{\cat} \cup P_{\text{\normalfont $\infeasible$}}^{\integer} $}{}
    \poll{
    \setlength{\parskip}{0pt} 
    \setlength{\parsep}{0pt}
    \setlength{\topsep}{0pt}


    
    
    %
    }

    \SetKwBlock{extpoll}{3. \textbf{Opportunistic extended poll} $ \bar{P}_{(k)}$}{}
    \extpoll{
    \setlength{\parskip}{0pt} 
    \setlength{\parsep}{0pt}
    \setlength{\topsep}{0pt}


    
    
    %
    }

    \SetKwBlock{update}{4. \textbf{Update}}{}
    \update{
    \setlength{\parskip}{0pt} 
    \setlength{\parsep}{0pt}
    \setlength{\topsep}{0pt}
    4.1. Update incumbent solutions $\x_{\feasible}$ and $\x_{\infeasible}$ with~\eqref{eq:best_feasible_and_infeasible} \;
    \vspace{0.05cm}
    
    4.2. \gmads  and barrier update: \;
    %

    %
    

    
    %
    
    

    %
    
    

    \textbf{If} $\y \prec_f \x_{\feasible}$ or $\y \prec_h \x_{\infeasible}$ for some $\y \in S_{(k)} \cup P_{(k)}$ (dominating iteration)\;

    \hspace{0.5cm} Increase $\bm{\Delta_{(k+1)}}$ and $\bm{\delta_{(k+1)}}$, and set $h_{(k+1)}^{\text{max}}=h(\x_{\infeasible})$  \;
    %

    \textbf{Else if} $0<h(\y)<h(\x_{\infeasible})$ for some $\y \in S_{(k)} \cup P_{(k)}$ (improving)\;
    %
    
    \hspace{0.5cm} Set $h_{(k+1)}^{\text{max}}= \max \{ h(\bm{v})\,:\,h(\bm{v})<h(\x_{\infeasible}), \, \bm{v} \in \mathbb{X} \cup S_{(k)} \cup P_{(k)} \}$  \;

    \textbf{Otherwise} (unsuccessful)\;
    %
    
    \hspace{0.5cm} Decrease $\bm{\Delta}_{(k+1)}$ and $\bm{\delta}_{(k+1)}$, and set $h_{(k+1)}^{\text{max}}=h(\x_{\infeasible})$  \;
    %

    \vspace{0.15cm}
    }

    \SetKwBlock{stop}{5. \textbf{Termination}}{}
    \stop{
    \setlength{\parskip}{0pt} 
    \setlength{\parsep}{0pt}
    \setlength{\topsep}{0pt}
    \textbf{If} stopping criteria is met then \textbf{stop} \; 
    
    \textbf{Else} $\mathbb{X} \leftarrow \mathbb{X} \cup S_{(k)} \cup P_{(k)}$ and $k \leftarrow k+1$
    }
%
} 
\caption{A simple instance of \catmads based on cross-validation.}
\label{algo:catmads_prototype}
\end{algorithm}
%
The prototype instance of \catmads is presented in \Cref{algo:catmads_prototype} and it is intentionally simple.
%
The implementation is built on top of the \nomad software~\cite{nomad4paper}, a C++ implementation of the \mads  algorithm with many extensions such as \gmads.
The following steps are adapted from \nomad: the quantitative poll (\gmads), the initial DoE with a Latin hypercube sampling, the PB, the update, and the search steps.
%
%

%
%
%
%

\Cref{algo:catmads_prototype} starts by initializing the static and dynamic parameters.
The dynamic parameters are initially indexed by $k=0$.
The DoE must be performed during the initialization, since at least two points are required to construct the categorical distance $d^{\cat}:\mathcal{X}^{\cat} \times \mathcal{X}^{\cat} \to \mathbb{R}_+$.
This distance is a weighted Euclidean distance comparing categorical components that are transformed into binary vectors using \textit{one-hot encodings}~\cite{BaDiMoLeSa2023}.
The weights reflect the importance of each binary variable assigned to a specific category and allows to construct problem-specific categorical distances.
%
%
These weights are determined by minimizing the \textit{Root Mean Square Error} (RMSE) of an \textit{inverse distance weighting} interpolation.
The RMSE is evaluated using $3$-fold cross-validation on the DoE.
%
%
%
Integer and continuous variables have no weights, since these variables are not considered for the categorical distance.
%
%
%
In \Cref{sec:cat_mads}, recall that a mismatching-based distance on the RBG example yields $d^{\cat}(\text{R, B})=0=d^{\cat}(\text{R, G})$.
Now, with the proposed distance, the distances would be $d^{\cat}(\text{R,B})=\theta_{R}+\theta_{B}$ and $d^{\cat}(\text{R, G})= \theta_R+\theta_G$, where the real parameters $\theta_R$, $\theta_B$ and $\theta_G$ are tuned by regression.
This distance distinguishes categories more finely than mismatches.
In other words, the neighbors in distance-induced neighborhoods are selected in a meaningful manner.

The initial DoE is allocated 20\% of the total budget of evaluations.
This percentage was determined empirically through preliminary tests. 
It represents a considerable number of evaluations.
However, the categorical distance is constructed only once at the initialization, the DoE must be sufficiently large to ensure that the categorical distance adequately models the mixed-variable objective function $f$.
%

%
The stopping criteria includes a budget limit on the number of evaluations and a convergence criterion triggered when the mesh size parameters reach their lower bounds.
The trigger parameter $\xi$ of the extended poll dictates which type of local minimum is associated to the convergence condition:
if $\xi<0$, then the convergence condition represents a necessary condition for a \big[$\cat$, $\continuous$, $\integer$\big] minimum;
else if $\xi \in \mathbb{R}_+$, then it represents a \big[$\cat-\continuous$, $\integer$\big] minimum, and otherwise $\xi=\infty$ represents a \big[$\cat-\continuous$, $\cat-\integer$\big] minimum.
%
%
%
%

%
%

The search is performed opportunistically.
If the previous iteration had a successful quantitative poll (feasible or infeasible), then the speculative search of \nomad is executed on the quantitative space.
The speculative (or dynamic) search generates candidate points along the last successful direction of \gmads ~\cite{AuDe2006, Le09b}. 
To capitalize on promising directions, the speculative search is repeated as long as it remains successful.

The default quadratic search of \nomad~\cite{CoLed2011} is also used on the quantitative variables.
At each iteration, quadratic models for the objective and constraint functions are constructed around incumbent solutions, with categorical variables fixed.
For a given incumbent solution, the proposed point minimizes its corresponding model of $f$, while either satisfying or respecting the barrier of its model constraints, for feasible and infeasible incumbents, respectively. 
%
%
For more details on the speculative and quadratic searches, see~\cite{nomad4paper}.

%

%
The number of points $m$ in the categorical polls
is constant and common for the feasible and infeasible categorical polls.
For a given problem, $m$ is set by the formula $m=\max\left(2, \sqrt{\left| \mathcal{X}^{\cat}\right|} \right)$, which scales with the total number of categories $\smash{\left| \mathcal{X}^{\cat} \right| = \prod_{i=1}^{n^{\cat}} \ell_i}$.
The square root balances the scaling of the product. 
The formula ensures $m\geq2$, to avoid cycling between two categorical components.
Alternatives formulas were tested, notably linear ones, \textit{e.g.}, $m=0.5 \left|\mathcal{X}^{\cat} \right|$,
but none performed better than the square-root formula in preliminary tests.

For the poll, the evaluation order prioritizes points generated from quantitative polls over those from categorical polls. 
Moreover, points from quantitative polls are ranked using quadratic models as in~\cite{nomad4paper}, while points from categorical polls follow their generation order.

\subsection{Comparison solvers}
\label{sec:solvers}

For benchmarking, each optimization problem is attributed a budget of evaluation of $250n$, where $n=n^{\cat}+n^{\integer}+n^{\continuous}$ is the total number of variables.

\catmads is benchmarked with the following state-of-the-art solvers, used in various fields involving blackboxes with categorical variables~\cite{JiKu2023, PrTr2024, SrArAh2025}:
\begin{itemize}
    \item \href{https://pymoo.org/customization/mixed.html}{\sf Pymoo}, a Python-based suite of optimization solvers. 
    The solver used for benchmarking is the mixed-variable and single objective implementation of a genetic algorithm~\cite{BlankDeb2020};


    \item  \href{https://hub.optuna.org/tags/cma-es/}{\sf Optuna}, a covariance matrix adaptation evolution strategy (CMA-ES) for categorical variables~\cite{AkSaYaOhKo2020};

    \item \href{https://botorch.org/}{\sf Botorch}, a BO solver, that uses a Gower-based distance kernel for categorical variables and Gaussian kernel for quantitative variables~\cite{BaKaJiDaLeWiBa2020}. 
    %

\end{itemize}
{\sf Pymoo}~and {\sf Optuna}~are used off the-shelf, with only evaluation budget specified and launched using the same DoE as \catmads.

{\sf Botorch}~requires several user-defined choices.
The hyperparameters of the GPs are adjusted via a maximum likelihood procedure.
%
The acquisition function is the Log Constrained Expected Improvement from~\cite{AmDaErBaBa2023}, optimized using {\sf Botorch}'s built-in solver \texttt{acqf\_mixed} with a multi-start of size $3n$.
{\sf Botorch}~does not explicitly distinguish categorical and integer variables.
The integer variables are continuously relaxed, and rounded to their nearest integer: this is a user choice made for simplicity.
The initial DoE also comes from \nomad.


%
Running Mixed Rastrigin (unconstrained) with 1,000 evaluations takes more than 24 hours with a 11th generation Intel i7-11800H (2.30 GHz) CPU and a NVIDIA GeForce RTX 3060 GPU.
For {\sf Botorch}, fully evaluating the budget of 250$n$ evaluations is not suitable for benchmarking across several instances of the problems. 
To reduce computational time, {\sf Botorch}'s stopping criterion, based on the relative decrease of the acquisition function, is employed with default parameters.
%
%
%

\subsection{Data profiles}
\label{sec:data_profiles}

%
%

%
All solvers are subject to some randomness.
Hence, each problem is run with five random seeds to ensure fairness.
An instance $p \in \mathcal{P}$ refers to a problem with a specific seed, where $\mathcal{P}$ is the set of instances.
There is a total of 160 instances, half constrained and half unconstrained.

%
%
%
%
%
%

%
For a given instance, let $f_{\star}$ be the smallest feasible objective function value and $f_0$ be an initial feasible function value.
Let $\mathcal{S}$ be a set of solvers considered for benchmarking.
A solver $s \in \mathcal{S}$ is said to have $\tau$-solved an instance $p \in \mathcal{P}$ when it has produced a feasible incumbent solution $\x_{\feasible}\in \Omega$ with a reduction $f_0 - f(\x_{\feasible})$ that is sufficient
relatively to the smallest reduction $f_0 - f_{\star}$ obtained by any solver on this instance~\cite{MoWi2009}.
%
The convergence test
can be expressed as
%
%
\begin{equation}
f_0 - f(\x_{\feasible}) \geq (1-\tau) ( f_0 - f_{\star})
\end{equation}
%
where $\tau \in [0,1]$ is a given tolerance and $f_0$ is either
\begin{itemize}
    \item the smallest function value in the common DoE for unconstrained problems~\cite{G-2025-36},

    \item or the largest function value among the first feasible solution find the solvers otherwise~\cite{G-2025-36}.
    For the unconstrained problems, this approach allows to compare solvers on problems without feasible solutions in the DoE.
\end{itemize}

A data profile~\cite{MoWi2009} represents the portion of $\tau$-solved instances by a solver $s \in \mathcal{S}$, such that
\begin{equation}
    \text{data}_s\left( \kappa \right) \coloneq \frac{1}{\left| \mathcal{P} \right|} \left| \left\{ p \in \mathcal{P} \, : \, \frac{k_{p, s}}{n_p + 1}    \leq \kappa \right\} \right| \in [0,1]
\end{equation}
where $k_{p,s} \geq 0$ is the number of evaluations required for the solver $s \in \mathcal{S}$ to $\tau$-solve an instance $p \in \mathcal{P}$ and $n_p$ is the number of variables in the instance $p \in \mathcal{P}$.
In the following plots, the horizontal axis $\kappa$ represents groups of $(n_{p}+1)$ evaluations.

\subsubsection{Results for the extended poll}

Before comparing \catmads with the other solvers, $\xi$ is chosen with data profiles showcasing the relative performance of \catmads with different values of $\xi$.
These data profiles are done respectively on unconstrained and constrained instances in 
\Cref{subfig:dataprofiles_unconstrained_EP,subfig:dataprofiles_constrained_EP}.

%
%

%
\begin{figure}[htb!]
\centering
\begin{subfigure}{\textwidth}
\centering
\includegraphics[width=1\linewidth]{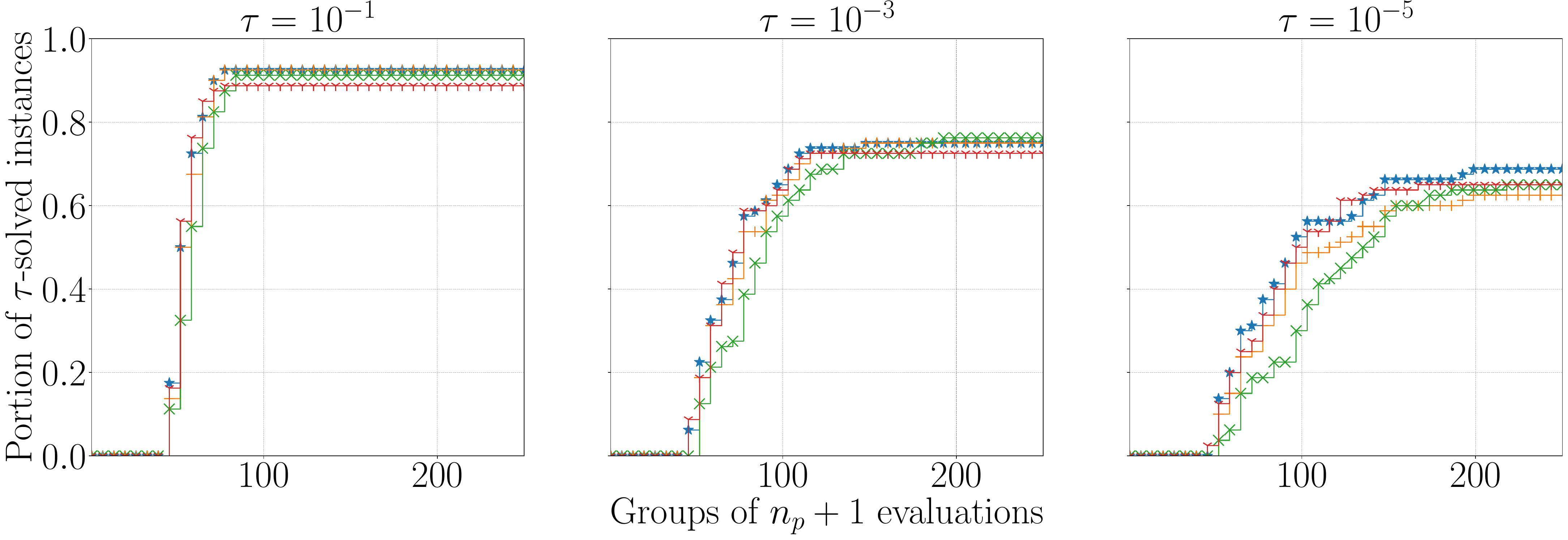}
\caption{Unconstrained test problems.}
\label{subfig:dataprofiles_unconstrained_EP}
\end{subfigure}
\begin{subfigure}{\textwidth}
\centering
\includegraphics[width=1\linewidth]{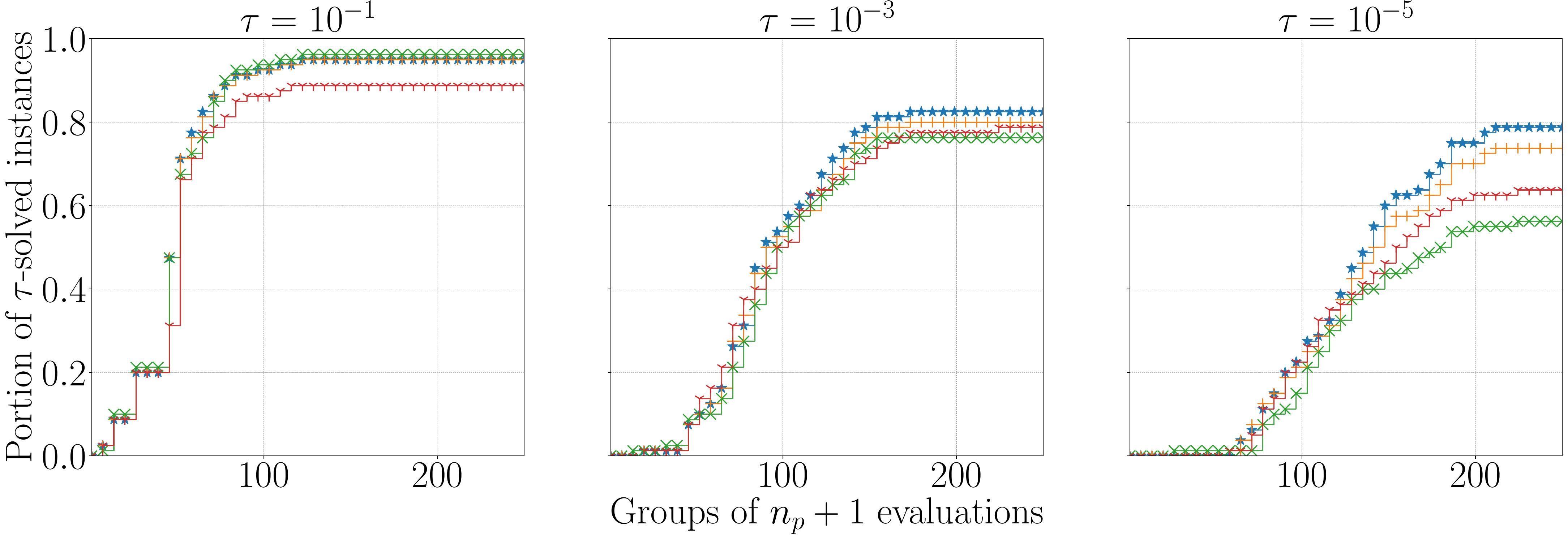}
\caption{Constrained test problems.}
\label{subfig:dataprofiles_constrained_EP}
\end{subfigure}
\begin{subfigure}{\textwidth}
\centering
\includegraphics[width=0.7\linewidth]{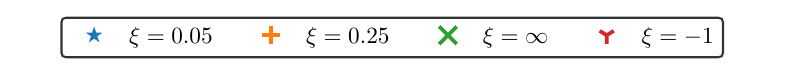}
\end{subfigure}
\caption{Data profiles for different values of $\xi$.}
\label{fig:data_profiles_EP}
\end{figure}

%
%
%
%
%
The value $0.05$, associated to a \big[$\cat-\continuous$, $\integer$\big] local minimum, yielded the best performance on the two smallest tolerances $10^{-3}$ and $10^{-5}$ for both unconstrained and constrained instances.
%
%
The value $\xi=\infty$ under-performs for the smallest tolerance $10^{-5}$.
%
Although $\xi=\infty$ is associated to the stronger \big[$\cat-\continuous$, $\cat-\integer$] local minimum its costs degrades performance. 
Based on these data profiles, the parameter $\xi$ is set to $0.05$.

\subsubsection{Results with existing solvers}

For the next data profiles, the solvers and \catmads are contained in the set of solvers 
$\mathcal{S}=\{\catmads, {\sf Pymoo}, {\sf Optuna}, {\sf BoTorch} \}$.
The data profiles comparing  \catmads with state-of-the-art solvers on the unconstrained and constrained instances are presented in \Cref{subfig:dataprofiles_unconstrained,subfig:dataprofiles_constrained}.
%
%
%
\begin{figure}[htb!]
\centering
\begin{subfigure}{\textwidth}
\centering
\includegraphics[width=1\linewidth]{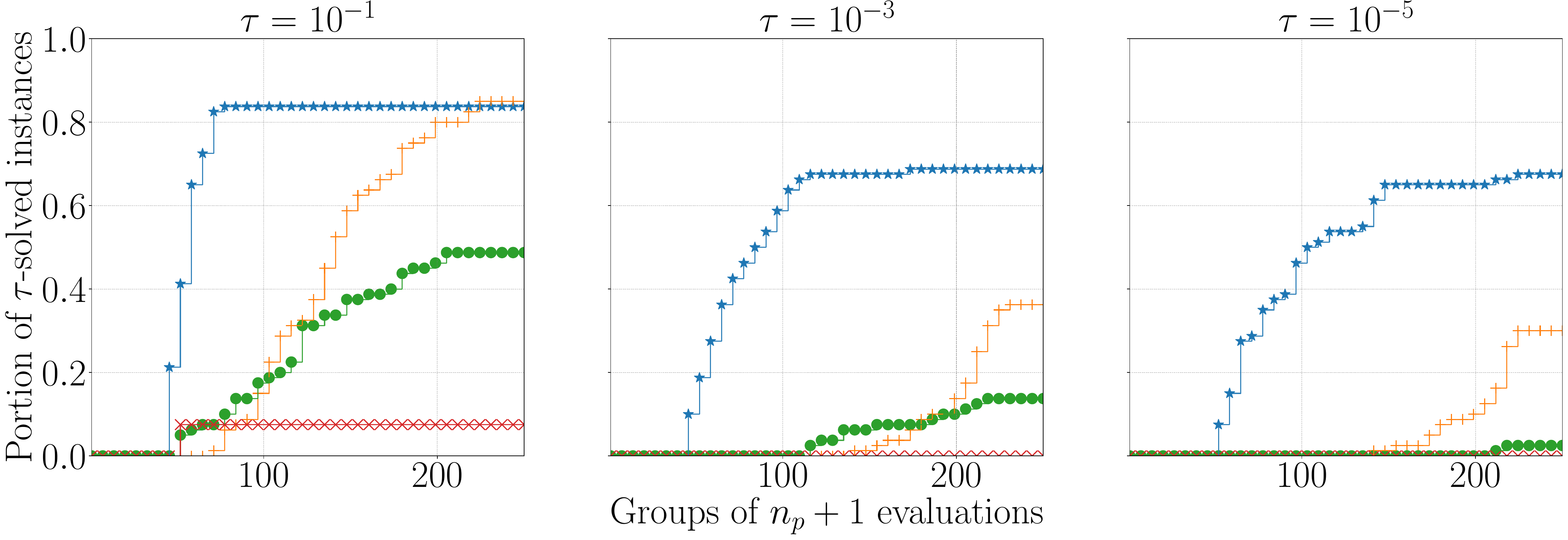}
\caption{Unconstrained test problems.}
\label{subfig:dataprofiles_unconstrained}
\end{subfigure}
\begin{subfigure}{\textwidth}
\centering
\includegraphics[width=1\linewidth]{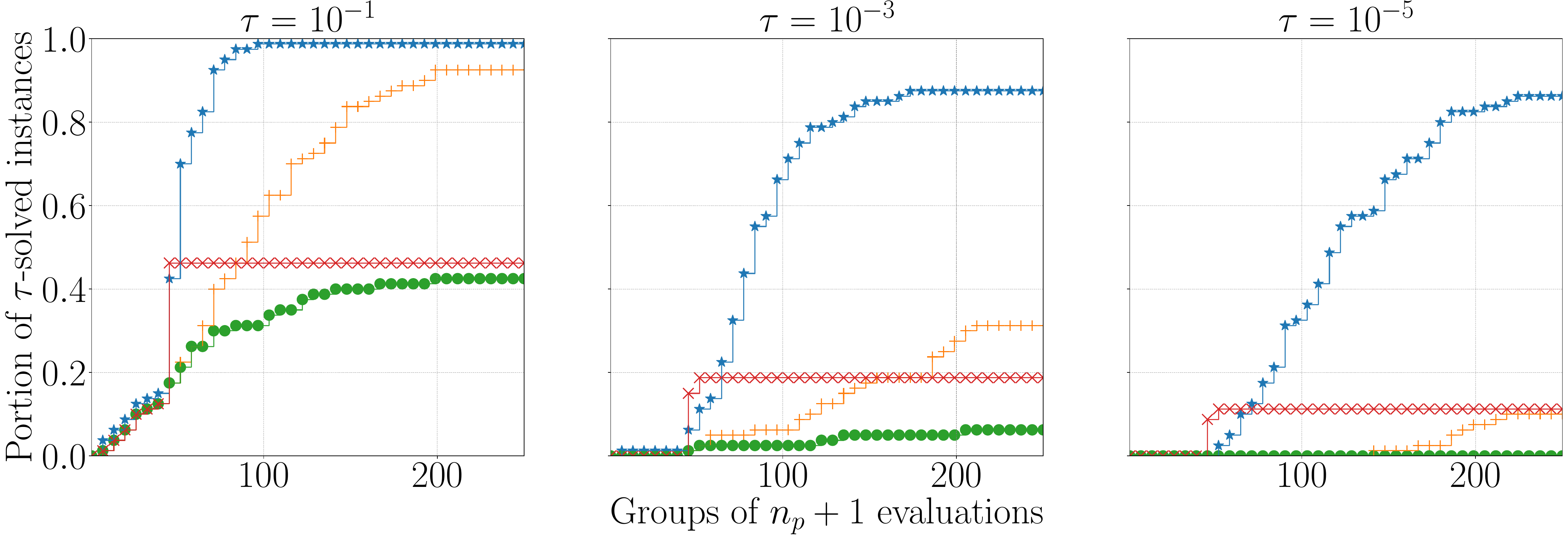}
\caption{Constrained test problems.}
\label{subfig:dataprofiles_constrained}
\end{subfigure}
\begin{subfigure}{\textwidth}
\centering
\includegraphics[width=0.7\linewidth]{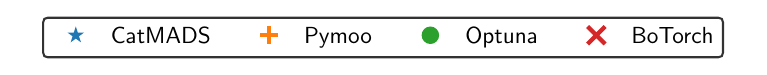}
\end{subfigure}
\caption{Comparison results on the test problems.}
\label{fig:data_profiles}
\end{figure}
The data profiles showcase that \catmads strongly dominates the other solvers.
%
For instance, the data profiles in \Cref{subfig:dataprofiles_unconstrained} with $\tau=10^{-5}$ shows that \catmads $\tau$-solves approximately 70\% of the problems against 30\% for {\sf Pymoo}.
%
%
\catmads performs particularly well on the smallest tolerance $\tau=10^{-5}$ and the constrained problems.
%

\section{Discussion}
\label{sec:discussion}

The present work introduces \catmads, a generalization of \mads  for mixed-variable constrained blackbox optimization.
\catmads improves the previous work \mvmads, in which discrete variables are treated with user-defined neighborhoods and constraints with the extreme barrier.
%
\catmads treats efficiently integer variables with the granular mesh and categorical variables with defined distance-based neighborhoods.
%
%
The constraints are handled with the progressive barrier~\cite{AuDe09a}, a much more efficient strategy than the extreme barrier~\cite{AuDe2006}.
The convergence analysis shows that \catmads has convergence guarantees under reasonable assumptions.
%

%
A prototype implementation of \catmads is compared to state-of-the-art solvers on 32 original mixed-variable problems.
%
%
Even if basic, the implementation is competitive and outperforms {\sf Pymoo}, {\sf Optuna} and {\sf BoTorch}. 
As such, \catmads addresses the research gap of an efficient mixed-variable constrained blackbox method with convergence guarantees.


The implementation in this work is intentionally simple.
%
%
In future work, \catmads will be hybridized with Bayesian optimization to produce an even more efficient solver.
%
Search steps focused on exploration will also be developed to balance the local nature of \catmads.
Moreover, \catmads will be applied on practical applications such as hyperparameter optimization of deep models or metaheuristic methods. 
%


\section*{Data availability statement}
Scripts and data are publicly available at \url{https://github.com/bbopt/CatMADS_prototype}.

\section*{Conflict of interest statement}
The authors state that there are no conflicts of interest.


%



\clearpage \newpage
\bibliographystyle{plain}
\bibliography{bibliography_catmads}
\pdfbookmark[1]{References}{sec-refs}

\end{document}